\documentclass{article}
\usepackage{a4wide}
\usepackage[UKenglish]{babel}
\usepackage{graphicx}
\usepackage{float}

\usepackage[colorlinks=true,linkcolor=blue,citecolor=red]{hyperref}
\usepackage{amssymb,empheq,bbold}
\usepackage[inline]{enumitem}
\usepackage{geometry}
\usepackage{caption}
\usepackage{amsthm}
\usepackage{bbm}
\usepackage{tikz}

\newcommand \commentout[1] {}
\newcommand{\R}{\mathbb{R}}

\newtheorem{theorem}{Theorem}
\newtheorem{definition}[theorem]{Definition}
\newtheorem{proposition}[theorem]{Proposition}
\newtheorem{lemma}[theorem]{Lemma}
\theoremstyle{definition}

\title{A Nonlocal $p$-Laplacian Interface Model with Sharp Interface}

\author{
Kehan Shi\thanks{{Department of Mathematics, China Jiliang University, Hangzhou 310018, China.} \texttt{kshi@cjlu.edu.cn}}
\and
Zuoqiang Shi\thanks{{Yau Mathematical Sciences Center, Tsinghua University, Beijing 100084, China;}  {Yanqi Lake Beijing Institute of Mathematical Sciences and Applications, Beijing 101408, China.} \texttt{zqshi@tsinghua.edu.cn}}%
\and
Tangjun Wang\thanks{{Corresponding author. Department of Mathematics, The University of Hong Kong, Pokfulam Road, Hong Kong SAR, China.} \texttt{wangtj@hku.hk}}
}
\date{}

\begin{document}

\maketitle

\begin{abstract}
We propose an energy-based nonlocal $p$-Laplacian interface problem. Neumann interface conditions are naturally formulated via the energy, while Dirichlet conditions are enforced through a penalty term. A key feature is that the model retains a sharp interface, which facilitates extension to other interface problems; we illustrate this by developing a nonlocal approximation for the $p$-Laplacian interface problem with membrane conditions. By establishing $\Gamma$-convergence and compactness, we prove that as the nonlocal horizon vanishes, minimizers of the nonlocal functionals converge to those of the local counterparts. Numerical experiments using an efficient finite element method confirm the convergence.
\end{abstract}

\medskip

\noindent{\bf Keywords:} nonlocal interface model, variational method, $\Gamma$-convergence, finite element method.

\medskip

\noindent{\bf Mathematics Subject Classification:} 35J20, 45A05, 65N30.

\section{Introduction}
An interface problem concerns a partial differential equation (PDE) defined on a bounded domain $\Omega\subset \R^d$ that is divided by an interface $\Gamma\subset \Omega$, across which the solution or its derivatives may have discontinuities. Typically, one considers
\begin{equation}\label{eq:1.1}
  -\mbox{div} (\lambda(x) \nabla u) = f \quad \text{in } \Omega,
\end{equation}
subject to the homogeneous Neumann boundary condition $\nabla u\cdot\vec{n}=0$ on $\partial \Omega$, where the diffusion coefficient $\lambda(x)$ is piecewise smooth but may jump across the interface $\Gamma$. The solution $u$ satisfies interface conditions on $\Gamma$, such as continuity of $u$ and a prescribed jump of the normal flux. Namely,
\begin{equation}\label{eq:1.2}
[u]_\Gamma = 0 \quad \mbox{and}\quad
\quad \left[\lambda(x)\nabla u\cdot\vec{n}_\Gamma \right]_\Gamma = g,
\end{equation}
where the notation $[v]_\Gamma$ denotes the jump of function $v:\Omega\rightarrow\R$ across the interface $\Gamma$
and $\vec{n}_\Gamma$ is the unit normal vector on $\Gamma$.
The interface problem arises naturally in fluid dynamics, materials science, biochemistry, etc. \cite{anderson1998diffuse,li2006immersed,alali2015peridynamics}.

The purpose of this paper is to extend the interface problem \eqref{eq:1.1}--\eqref{eq:1.2} to the nonlocal setting.
Nonlocal equations provide a powerful framework for modeling processes with long-range interactions and memory effects that cannot be adequately described by local PDEs \cite{andreu2010nonlocal,du2019nonlocal}.
They have attracted significant attention in recent years and have been widely used in peridynamics \cite{rokkam2019nonlocal}, biological modeling \cite{painter2024biological}, image processing \cite{shi2021image}, etc.
The nonlocal counterpart of equation \eqref{eq:1.1} has been extensively investigated in the literature. However, extending interface problems to the nonlocal setting introduces additional challenges, as the local interface conditions in \eqref{eq:1.2} cannot be directly imposed within a nonlocal framework.
Among the few related studies \cite{alali2015peridynamics,capodaglio2020energy,seleson2013interface}, the interface condition is typically treated via a volume constraint approach, in which the interface $\Gamma$ is approximated by an extended interface $\Gamma_\delta=\{x\in\Omega: \mbox{dist}(x,\Gamma)<\delta\}$ of thickness $\delta$.

In this paper, we propose a new nonlocal approximation for the interface problem \eqref{eq:1.1}--\eqref{eq:1.2}. Specifically, we consider the associated energy functional
\[
\mathcal{E}(u)
=\frac{1}{2}\int_\Omega\lambda(x)|\nabla u|^2dx -\int_\Omega fudx-\int_\Gamma gudS,\quad u\in H^1(\Omega),
\]
and propose to approximate it by the nonlocal functional
\begin{align*}
  \mathcal{E}_\delta(u)=
    \frac{1}{2\delta^2}\int_{\Omega}\int_{\Omega}\Lambda(x,y)R_\delta(x-y)|u(y)-u(x)|^2&dydx
-\int_{\Omega}fudx
-\int_\Gamma g K_{\delta,1}\ast u dS \\
&+\frac{1}{\delta}
\int_\Gamma\left|K_{\delta,1}\ast u-K_{\delta,2}\ast u\right|^2dS,\quad u\in L^2(\Omega).
\end{align*}
Here
\[
\lambda(x)=\lambda_1\chi_{\Omega_1}(x)+\lambda_2\chi_{\Omega_2}(x),\quad
\Lambda(x,y)=\lambda_1\chi_{\Omega_1}(x)\chi_{\Omega_1}(y)
+\lambda_2\chi_{\Omega_2}(x)\chi_{\Omega_2}(y),
\]
with constants $\lambda_1,\lambda_2>0$, $\Omega_1$ and $\Omega_2$ are two disjoint subsets of $\Omega$ divided by $\Gamma$ (see Figure \ref{fig:1} for an illustration),
$R_\delta$ and $K_\delta$ are two nonlocal kernels, and
\[
\left(K_{\delta,i}\ast u\right)(x)
=\frac{1}{\int_{\Omega_i}K_\delta(x-y)dy}\int_{\Omega_i}K_\delta(x-y)u(y)dy,\quad i=1,2.
\]
We consider the energy functional $\mathcal{E}$ rather than the associated Euler-Lagrange equation \eqref{eq:1.1}--\eqref{eq:1.2} because the interface condition for the normal flux in \eqref{eq:1.2}, which is difficult to handle in the nonlocal setting, is implicitly contained in the functional.
The same approach has been used in \cite{capodaglio2020energy}.
To approximate the interface integral $\int_\Gamma gudS$ in $\mathcal{E}$, we utilize the nonlocal average along $\Gamma$.
For any $u\in L^2(\Omega)$, $K_{\delta,1}\ast u\in C^1$ whenever $K_\delta\in C^1$. Thus the interface integrals $\int_\Gamma$ in $\mathcal{E}_\delta$ are well-defined.
Formally, one has $ \int_\Gamma g K_{\delta,1}\ast u dS \approx \int_\Gamma gudS$
for small $\delta>0$.
The last term in $\mathcal{E}_\delta$ is motivated by \cite{wang2023nonlocal,gan2024convergence} that penalizes the jump of $u$ across $\Gamma$.
Namely, it approximates the first interface condition in \eqref{eq:1.2}.
This is not necessary in the local energy $\mathcal{E}$, as the continuity of $u$ across $\Gamma$ is achieved through the solution space $u\in H^1(\Omega)$.

In contrast to existing nonlocal interface models \cite{alali2015peridynamics,capodaglio2020energy,seleson2013interface}, the proposed nonlocal functional $\mathcal{E}_\delta$ is formulated with a sharp interface.
This yields several favorable properties.
First, it leads to independent nonlocal diffusion on $\Omega_1$ and $\Omega_2$, i.e.,
\begin{align*}
\int_\Omega&\int_\Omega \Lambda(x,y) R_\delta(x-y)|u(y)-u(x)|^2dydx\\
&=\lambda_1\int_{\Omega_1}\int_{\Omega_1}R_\delta(x-y)|u(y)-u(x)|^2dydx
+\lambda_2\int_{\Omega_2}\int_{\Omega_2}R_\delta(x-y)|u(y)-u(x)|^2dydx.
\end{align*}
Second, no extension of $g$ from $L^2(\Gamma)$ to $H^1(\Omega)$ is needed.
Finally, the approach can be easily generalized to more complicated interface problems.
To illustrate it, we consider a nonlinear interface problem of $p$-Laplacian type with membrane conditions
\begin{align}\label{eq:1.3}
\begin{cases}
-\mbox{div}(\lambda(x)|\nabla u|^{p-2}\nabla u)=f, & \mbox{in } \Omega, \\
|\nabla u|^{p-2}\nabla u\cdot\vec{n}=0, & \mbox{on } \partial\Omega, \\
\lambda_1 |\nabla u_1|^{p-2}\nabla u_1\cdot \vec{n}_\Gamma
=\gamma |u_2-u_1|^{p-2}(u_2-u_1)+g, & \mbox{on } \Gamma, \\
\lambda_2 |\nabla u_2|^{p-2}\nabla u_2\cdot \vec{n}_\Gamma
=\gamma |u_2-u_1|^{p-2}(u_2-u_1), & \mbox{on } \Gamma,
\end{cases}
\end{align}
where $1<p<\infty$, $\gamma>0$ is a constant, and
\[
u=
\begin{cases}
u_1, & \text{in } \Omega_1, \\
u_2, & \text{in } \Omega_2.
\end{cases}
\]
In equation \eqref{eq:1.3}, the normal flux across the interface $\Gamma$ depends on the jump of the solution.
If there is no jump of the solution through $\Gamma$ and $p=2$, equation \eqref{eq:1.3} reduces to equation \eqref{eq:1.1}--\eqref{eq:1.2}.
The linear case $p=2$ of \eqref{eq:1.3} has been studied extensively in recent years
\cite{dorfler2020elliptic,maia2023some,ciavolella2021existence,maia2024generalized}.
To further demonstrate the generality of the proposed approach, we focus on the $p$-Laplacian operator in \eqref{eq:1.3}.
Studies on interface problems involving $p$-Laplacian operators can be found in
\cite{gwinner2024optimal,borsuk2024interface}.

The associated energy functional for \eqref{eq:1.3} reads
\[
\mathcal{F}(u)
=\frac{1}{p}\int_{\Omega}\lambda(x)|\nabla u|^pdx
 -\int_\Omega fudx
 -\int_\Gamma gu_1dS
 +\frac{\gamma}{p}\int_\Gamma |u_1-u_2|^pdS,\quad u\in W^{1,p}(\Omega_1)\cap W^{1,p}(\Omega_2),
\]
and it is approximated by the nonlocal functional
\begin{align*}
  \mathcal{F}_\delta(u)=
    \frac{1}{p\delta^p}\int_{\Omega}\int_{\Omega}\Lambda(x,y)R_\delta(x-y)|u(y)-u(x)|^p&dydx
-\int_{\Omega}fudx
-\int_\Gamma g K_{\delta,1}\ast u dS \\
&+
\frac{\gamma}{p}\int_\Gamma\left|K_{\delta,1}\ast u-K_{\delta,2}\ast u\right|^pdS,\quad u\in L^p(\Omega).
\end{align*}
An interesting fact is that $\mathcal{E}_\delta$ and $\mathcal{F}_\delta$ have the same structure.
More precisely, $\mathcal{F}_\delta$ coincides with $\mathcal{E}_\delta$ by taking $p=2$ and $\gamma=\frac{2}{\delta}$.

Under standard assumptions, the existence and uniqueness of minimizers for $\mathcal{E}_\delta$ and $\mathcal{F}_\delta$ can be established via the direct method of the calculus of variations.
The results rely on the nonlocal Poincar\'{e} equality.
The functionals $\mathcal{E}_\delta$ and $\mathcal{F}_\delta$ are introduced as approximations of $\mathcal{E}$ and $\mathcal{F}$ respectively, and it is therefore necessary to justify the validity of these approximations. This is achieved by studying their $\Gamma$-convergence and compactness.

In addition to the theoretical analysis, we also investigate numerical solutions of the nonlocal model. In this work, we adopt the method proposed in \cite{gan2025nonlocal}, which exploits the tensor-product structure of the Gaussian kernel and the multi-cubic polynomial basis to decompose a $2d$-dimensional integral into the product of two $d$-dimensional integrals, thereby significantly reducing the computational cost. With the aid of this efficient numerical solver, we verify that the minimizer of the nonlocal model converges to that of the corresponding local model at a first-order rate.

This paper is organized as follows. In section 2, we present the assumptions and mathematical tools needed for the study.
The existence and uniqueness of minimizers for the functionals are proved in section 3. Section 4 is devoted to the proof of $\Gamma$-convergence and compactness of the functionals, which lead to the convergence of the minimizers. Numerical methods and experiments are presented in section 5. We conclude the paper in section 6.

\section{Preliminaries}

\subsection{Settings}

Let $\Omega\subset\mathbb{R}^d$ be a bounded domain with Lipschitz boundary $\partial\Omega$,
$\Omega_1$ and $\Omega_2$ be two disjoint subsets of $\Omega$ such that $\partial\Omega_1\subset \partial\Omega_2$ and $\Omega=\overline{\Omega}_1\cup\Omega_2$.
The interface, i.e., the common boundary between $\Omega_1$ and $\Omega_2$, is defined as
$\Gamma=\overline{\Omega}_1\cap \overline{\Omega}_2$.
The unit outward normal vectors on $\partial\Omega$ and $\partial\Omega_1$ are denoted by $\vec{n}$ and $\vec{n}_\Gamma$, respectively.
See Figure \ref{fig:1} for an illustration.

\begin{figure}[H]
  \centering
\begin{tikzpicture}[scale=1.1]
\begin{scope}
\filldraw[thick, fill=white]
  (-2,0)
  to[out=80,  in=180] (-1.2,1.2)
  to[out=0,   in=180] (0.8,1.6)
  to[out=0,   in=90] (3,0)
  to[out=270, in=0]   (2.2,-0.9)
  to[out=180, in=360] (-0.8,-1.6)
  to[out=180, in=260] (-2,0);

\draw[->,thick]
  (3,0) -- (3.5,0);
\node at (3.7,0) {$\vec{n}$};
\end{scope}

\begin{scope}
\filldraw[thick, fill=gray!35]
  (-0.9,0)
  to[out=90,  in=180] (-0.4,0.7)
  to[out=0,   in=180] (0.6,0.9)
  to[out=0,   in=90] (1.4,0.5)
  to[out=270, in=90]  (1.2,-0.4)
  to[out=270, in=360] (0.4,-0.8)
  to[out=180, in=260] (-0.9,0);

\node at (0.3,-0.1) {$\Omega_1$};

\draw[->,thick]
  (1.4,0.5) -- (1.9,0.5);
\node at (2.15,0.5) {$\vec{n}_\Gamma$};
\end{scope}

\node at (-1.1,-0.9) {$\Omega_2$};
\node at (1.5,0.8) {$\Gamma$};
\end{tikzpicture}
\caption{Illustration of the domains $\Omega_1$, $\Omega_2$, and the interface $\Gamma$.}
\label{fig:1}
\end{figure}

We are interested in the functional associated to the Poisson interface problem \eqref{eq:1.1}--\eqref{eq:1.2}
\begin{align}\label{eq:energy_local}
\mathcal{E}(u)=
\begin{cases}
  \frac{1}{2}\int_{\Omega}\lambda(x)|\nabla u|^2dx
  -\int_{\Omega}fudx -\int_\Gamma gudS, &\mbox{ if } u\in\mathcal{U}(\Omega),\\
  +\infty, &\mbox{ otherwise},
\end{cases}
\end{align}
where $\lambda(x)=\lambda_1\chi_{\Omega_1}(x)+\lambda_2\chi_{\Omega_2}(x)$,
$\lambda_1,\lambda_2$ are positive constants, $f\in L^2(\Omega)$, $g\in L^2(\Gamma)$, and
\[
\mathcal{U}(\Omega)=\left\{u\in H^1(\Omega): \frac{\partial u}{\partial\vec{n}}=0 \mbox{ on } \partial\Omega, \int_{\Omega} udx=0\right\}.
\]
The nonlocal analogue of $\mathcal{E}$ reads
\begin{align}\label{eq:energy_nonlocal}
\begin{split}
  \mathcal{E}_\delta(u)=
  \begin{cases}
     \frac{1}{2\delta^2}\int_\Omega\int_\Omega \Lambda(x,y) R_\delta(x-y)|u(y)-u(x)|^2dydx-\int_{\Omega}fudx\\
\qquad\qquad\qquad\quad
- \int_\Gamma g K_{\delta,1}\ast u dS
+\frac{1}{\delta}
\int_\Gamma\left|K_{\delta,1}\ast u-K_{\delta,2}\ast u\right|^2dS,
\quad\mbox{if } u\in L^2_0(\Omega), \\
 +\infty,  \qquad\qquad\qquad\qquad\qquad\qquad\qquad\qquad\qquad\qquad\qquad
    \qquad\qquad\quad\mbox{otherwise},
  \end{cases}
\end{split}
\end{align}
where $\Lambda(x,y)=\lambda_1\chi_{\Omega_1}(x)\chi_{\Omega_1}(y)
+\lambda_2\chi_{\Omega_2}(x)\chi_{\Omega_2}(y)$,
\[
L^p_0(\Omega)=\left\{u\in L^p(\Omega): \int_{\Omega}udx=0\right\}, \quad 1<p<\infty,
\]
and for any $\delta>0$,
\[
R_\delta(s)=\frac{1}{\delta^d}R\left(\frac{|s|^2}{\delta^2}\right),\quad
K_\delta(s)=\frac{1}{\delta^d}K\left(\frac{|s|^2}{\delta^2}\right),
\quad s\in\R,
\]
the nonlocal kernels $R, K: [0,\infty)\rightarrow [0,\infty)$ are nonincreasing $C^1$ functions with compact support.
Assume w.l.o.g. that
\[
  \int_{\R^d}R(|z|^2)|z_1|^2dz=1,
\]
where $z_1$ is the first coordinate of $z$.
Here and in the following, we use the normalized convolution notation
\[
\left(K_{\delta,i}\ast u\right)(x)
=\frac{1}{\int_{\Omega_i}K_\delta(x-y)dy}\int_{\Omega_i}K_\delta(x-y)u(y)dy,\quad i=1,2.
\]

We further study the functional associated to the $p$-Laplacian interface problem with membrane
conditions \eqref{eq:1.3}
\begin{align*}
\mathcal{F}(u)=
\begin{cases}
  \frac{1}{p}\int_{\Omega}\lambda(x)|\nabla u|^pdx
 -\int_\Omega fudx
 -\int_\Gamma gu_1dS
 +\frac{\gamma}{p}\int_\Gamma |u_1-u_2|^pdS, &\mbox{ if } u\in \mathcal{W}(\Omega),\\
  +\infty, &\mbox{ otherwise},
\end{cases}
\end{align*}
where $1<p<\infty$, $\gamma>0$ is a constant, $f\in L^q(\Omega)$, $g\in L^q(\Gamma)$, $q=\frac{p}{p-1}$,
\[
u=
\begin{cases}
u_1, & \text{in } \Omega_1, \\
u_2, & \text{in } \Omega_2,
\end{cases}
\]
and
\[
\mathcal{W}(\Omega)=\left\{u\in W^{1,p}(\Omega_1)\cap W^{1,p}(\Omega_2):
\frac{\partial u}{\partial\vec{n}}=0 \mbox{ on } \partial\Omega, \int_{\Omega} udx=0\right\}.
\]
The nonlocal analogue of $\mathcal{F}$ reads
\begin{align*}
\begin{split}
  \mathcal{F}_\delta(u)=
  \begin{cases}
     \frac{1}{p\delta^p}\int_{\Omega}\int_{\Omega}\Lambda(x,y)R_\delta(x-y)|u(y)-u(x)|^pdydx
-\int_{\Omega}fudx\\
\qquad\qquad\qquad\quad
-\int_\Gamma g K_{\delta,1}\ast u dS
+\frac{\gamma}{p}\int_\Gamma\left|K_{\delta,1}\ast u-K_{\delta,2}\ast u\right|^pdS,
\quad\mbox{if } u\in L^p_0(\Omega), \\
 +\infty,  \qquad\qquad\qquad\qquad\qquad\qquad\qquad\qquad\qquad\qquad\qquad
    \qquad\qquad\quad\mbox{otherwise}.
  \end{cases}
\end{split}
\end{align*}
Throughout this paper, $C$ stands for a positive constant, which may change from line to line.

\subsection{Mathematical tools}
As we study variational convergence for sequences of functionals, the mathematical tool we use is $\Gamma$-convergence,
which is introduced by De Giorgi \cite{dal2012introduction,braides2002gamma}.

\begin{definition}
Given a metric space $X$ and functionals $F_n, F: X\rightarrow [-\infty,\infty]$, it is said that $F_n$ $\Gamma$-converges to $F$ as $n\rightarrow\infty$, denoted by $F_n\stackrel{\Gamma}{\longrightarrow }F$,
if for every $x\in X$, the following holds.
\begin{itemize}
  \item Liminf inequality: For every sequence $\{x_n\}_{n\in\mathbb{N}}\subset X$ converging to $x$,
      \begin{equation*}
        \liminf_{n\rightarrow\infty} F_n(x_n)\geq F(x).
      \end{equation*}
  \item Limsup inequality: There exists a sequence $\{x_n\}_{n\in\mathbb{N}}\subset X$ converging to $x$ such that
      \begin{equation*}
        \limsup_{n\rightarrow\infty} F_n(x_n)\leq F(x).
      \end{equation*}
\end{itemize}
\end{definition}

A fundamental property of $\Gamma$-convergence is that, under the assumption of compactness, it implies the convergence of minimizers. See \cite[Theorem 1.21]{braides2002gamma} for the proof.

\begin{proposition}\label{pr:2.2}
  Given a metric space $X$ and functionals $F_n, F: X\rightarrow [-\infty,\infty]$, such that
  $F_n\stackrel{\Gamma}{\longrightarrow }F\not\equiv\infty$ as $n\rightarrow\infty$.
  If there exists a precompact sequence $\{x_n\}_{n\in\mathbb{N}}$ such that
  \begin{equation*}
    \lim_{n\rightarrow\infty}\left(F_n(x_n)-\inf_{x\in X}F_n(x)\right)=0,
  \end{equation*}
  then
  \begin{equation*}
    \lim_{n\rightarrow\infty}\inf_{x\in X}F_n(x)=\inf_{x\in X}F(x),
  \end{equation*}
  and any cluster point of $\{x_n\}_{n\in\mathbb{N}}$ is a minimizer of $F$.
\end{proposition}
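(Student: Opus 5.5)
The statement just above is Proposition~\ref{pr:2.2}, the fundamental theorem of $\Gamma$-convergence, and I would prove it directly from the two defining inequalities, without any structure specific to the functionals of this paper. Write $m_n=\inf_X F_n$ and $m=\inf_X F$. The plan is to prove $\limsup_n m_n\le m$ using recovery sequences, then to prove $\liminf_n m_n\ge m$ together with the minimality of cluster points using the liminf inequality along subsequences and the precompactness of $\{x_n\}$.

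\emph{Upper bound.} Fix any $y\in X$. By the limsup inequality there is a sequence $y_n\to y$ with $\limsup_n F_n(y_n)\le F(y)$. Since $m_n\le F_n(y_n)$, we get $\limsup_n m_n\le F(y)$, and taking the infimum over $y$ gives $\limsup_n m_n\le m$. Because $F\not\equiv\infty$, we have $m<\infty$, so this bound is finite or $-\infty$.

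\emph{Lower bound and cluster points.} Choose a subsequence $n_k$ with $m_{n_k}\to\liminf_n m_n$. By precompactness, after passing to a further subsequence we may assume $x_{n_k}\to \bar x$ for some $\bar x\in X$. The liminf inequality is formulated for full sequences, so I extend the subsequence: set $z_n=x_{n_k}$ for $n=n_k$ and $z_n=\bar x$ otherwise. Then $z_n\to\bar x$, and
\[
F(\bar x)\le \liminf_n F_n(z_n)\le \liminf_k F_{n_k}(x_{n_k}).
\]
This extension is the only step needing care, and it is resolved by the construction of $z_n$ just given. Next, the hypothesis $F_n(x_n)-m_n\to0$ gives
\[
\liminf_k F_{n_k}(x_{n_k})=\lim_k m_{n_k}=\liminf_n m_n .
\]
Combining, $m\le F(\bar x)\le \liminf_n m_n\le \limsup_n m_n\le m$. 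Hence $\lim_n m_n=m$ and $F(\bar x)=m$.

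For an arbitrary cluster point $x^*$ of $\{x_n\}$, I would run the same argument along a subsequence $x_{n_j}\to x^*$, extended to a full sequence in the same way. This gives
\[
F(x^*)\le\liminf_j F_{n_j}(x_{n_j})=\lim_j m_{n_j}=m,
\]
so $x^*$ is a minimizer of $F$.

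A degenerate case needs a remark. If $m_n=-\infty$, the hypothesis is interpreted as $F_n(x_n)\to-\infty$. The chain of inequalities above still forces $F(\bar x)=-\infty=m$, so the conclusion holds in that case as well.
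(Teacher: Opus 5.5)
Your proof is correct and is the standard argument: recovery sequences give $\limsup_n \inf_X F_n\le \inf_X F$, and the liminf inequality at a cluster point of the almost-minimizers $\{x_n\}$ gives the lower bound and the minimality of cluster points. The paper does not prove this proposition itself but cites \cite[Theorem 1.21]{braides2002gamma}, whose proof is this same argument; your padding of the subsequence to the full sequence $z_n$ is the usual way of checking that the $\Gamma$-liminf inequality passes to subsequences.
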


We also require two standard results introduced for the study of nonlocal $p$-Laplacian operators, i.e., Poincar\'{e}'s inequality and compactness.
The proof can be founded in \cite[Proposition 4.1, Proposition 3.2]{andreu2008nonlocal}.

\begin{proposition}\label{pr:2.3}
Let $u\in L^p(\Omega)$, $1\leq p<\infty$, and $\delta>0$ be fixed. There exists a constant $C>0$ depending only on $R$, $\Omega$, $p$, and $\delta$, such that
\begin{equation*}
  C\int_\Omega\left|u-\frac{1}{|\Omega|}\int_\Omega udx\right|^p
  \leq \int_\Omega\int_\Omega R_\delta(x-y)|u(y)-u(x)|^pdydx.
\end{equation*}
\end{proposition}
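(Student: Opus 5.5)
We argue by contradiction combined with compactness, which is the standard route to a nonlocal Poincaré inequality at fixed $\delta$. Since $\Omega$ is bounded and $\delta>0$ is fixed, the kernel $R_\delta$ is a fixed nonnegative function. Because $R$ is nonincreasing and nontrivial, there exist $r>0$ and $c_0>0$ with $R_\delta(s)\ge c_0$ for $|s|<r$. Replacing $R_\delta$ by $c_0\chi_{B_r}$ only decreases the right-hand side, so it suffices to prove the inequality for the indicator kernel $J(s)=\chi_{\{|s|<r\}}$.

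Suppose the inequality fails. Then there are $u_n\in L^p(\Omega)$ with zero mean and $\|u_n\|_{L^p}=1$ such that
\[
\int_\Omega\int_\Omega J(x-y)|u_n(y)-u_n(x)|^p\,dy\,dx\to 0 .
\]
The key step is compactness: we show that $\{u_n\}$ is precompact in $L^p(\Omega)$.

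\begin{itemize}
\item \emph{Decomposition.} Set $\bar u_n(x)=\frac{1}{|B_r(x)\cap\Omega|}\int_{B_r(x)\cap\Omega}u_n$. Since $|B_r(x)\cap\Omega|$ is bounded below by a positive constant (Lipschitz boundary), Jensen's inequality gives $\|u_n-\bar u_n\|_{L^p}^p\le C\int\!\!\int J|u_n(y)-u_n(x)|^p\to 0$.
\item \emph{Equicontinuity.} The averages $\bar u_n$ are equibounded, and equicontinuous in $x$, because the averaging window varies continuously in $x$ and $\|u_n\|_{L^p}=1$.
\item \emph{Conclusion.} By Arzelà–Ascoli (or Kolmogorov–Riesz), $\bar u_n$ is precompact in $L^p$, hence so is $u_n$.
\end{itemize}

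Extract a subsequence with $u_n\to u$ in $L^p$. Then $\|u\|_{L^p}=1$ and $\int_\Omega u=0$, and Fatou's lemma gives $\int\!\!\int J(x-y)|u(y)-u(x)|^p=0$. So $u(x)=u(y)$ for a.e. pair with $|x-y|<r$, meaning $u$ is locally a.e. constant. Since $\Omega$ is connected, a chaining argument along overlapping balls of radius $r/2$ shows $u$ is a.e. constant on $\Omega$. The zero mean then forces $u=0$, contradicting $\|u\|_{L^p}=1$. The constant produced this way depends only on $R$, $\Omega$, $p$ and $\delta$.

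The main obstacle is the precompactness step, in particular showing that the normalizing denominator $|B_r(x)\cap\Omega|$ stays uniformly positive and varies continuously near the Lipschitz boundary. If that proves awkward, an alternative avoids compactness entirely. Cover $\Omega$ by finitely many cubes of side less than $r/\sqrt d$ and use the elementary bound $\int_Q\int_Q|u(y)-u(x)|^p\ge c\int_Q|u-u_Q|^p$ on each cube. Then chain across overlapping cubes, using connectedness, to control the differences between the local means $u_Q$ and the global mean. This chaining argument is constructive and gives an explicit constant, at the cost of more bookkeeping.
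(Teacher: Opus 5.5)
The paper does not prove this proposition; it cites \cite[Proposition 4.1]{andreu2008nonlocal}. Your argument is the standard contradiction--compactness--connectedness proof of that result: normalize a violating sequence, get strong $L^p$ compactness from the vanishing nonlocal energy, and use connectedness of $\Omega$ to force the zero-mean limit to vanish. For $1<p<\infty$ your proof is correct as written.

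What you call the main obstacle is not one. The function $m(x)=|B_r(x)\cap\Omega|$ satisfies $|m(x)-m(x')|\le|B_r(x)\triangle B_r(x')|$. It is also positive at every point of the compact set $\overline\Omega$, because $\Omega$ is open. Hence $\min_{\overline\Omega}m>0$, and no boundary regularity is needed.

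There is, however, a gap at the endpoint $p=1$, which the statement includes. Your equicontinuity claim for $\bar u_n$ rests only on $\|u_n\|_{L^p}=1$. For $p>1$, H\"older gives $\int_{B_r(x)\triangle B_r(x')}|u_n|\le|B_r(x)\triangle B_r(x')|^{1-1/p}$ uniformly in $n$, but for $p=1$ an $L^1$ bound gives no such control. For example, if $u_n$ is a unit $L^1$ mass concentrating at a point $x_*$, then $\bar u_n(x)$ drops from about $1/m(x)$ to $0$ within distance $O(1/n)$ as $|x-x_*|$ crosses $r$. So averages against an indicator kernel need not be equicontinuous.

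For your particular sequence the claim is still true, but it needs an extra step:
\begin{itemize}
\item $|\bar u_n|\le\|u_n\|_{L^1}/\min_{\overline\Omega}m$, so $\{\bar u_n\}$ is bounded in $L^\infty$;
\item $u_n-\bar u_n\to0$ in $L^1$;
\item hence $\{u_n\}$ is uniformly integrable, so $\sup_n\int_{B_r(x)\triangle B_r(x')}|u_n|\to0$ as $x'\to x$, which restores equicontinuity.
\end{itemize}

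Alternatively, do not replace $R_\delta$ by $c_0\chi_{B_r}$ in the compactness step. Instead normalize by $\int_\Omega R_\delta(x-y)\,dy$, which is at least $c_0 m(x)$. The functions $x\mapsto\int_\Omega R_\delta(x-y)u_n(y)\,dy$ are then equicontinuous for every $p\ge1$, by uniform continuity of $R_\delta$ and $\|u_n\|_{L^1}\le|\Omega|^{1-1/p}$. Use $R_\delta\ge c_0\chi_{B_r}$ only in the final connectedness step. The cited reference likewise assumes a continuous kernel $J$ with $J(0)>0$.

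Your cube-chaining fallback also covers $p=1$, provided you run it on the pieces $Q\cap\Omega$ rather than on whole cubes. The Jensen bound only needs $\mathrm{diam}\,Q<r$, and connectedness of $\Omega$ makes the overlap graph of the nonempty pieces connected.
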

In the following, we call $\delta_n$ a null sequence if $\delta_n\rightarrow 0$ as $n\rightarrow \infty$.
\begin{proposition}\label{pr:2.4}
Let $\delta_n$ be a null sequence, $u_n, u\in L^p(\Omega)$, $1< p<\infty$, such that $u_n\rightharpoonup u$ in $L^p(\Omega)$ and
\[
\sup_{n}\frac{1}{\delta^p_n}\int_\Omega\int_\Omega R_{\delta_n}(x-y)|u_n(y)-u_n(x)|^pdydx<\infty.
\]
Then we have $u\in W^{1,p}(\Omega)$ and
\[
R(|z|)^{1/p}\chi_{\Omega}\left(x+\delta_n z\right)\frac{u_n(x+\delta_n z)-u_n(x)}{\delta_n}
\rightharpoonup
R(|z|^2)^{1/p} z\cdot\nabla u,
\]
in $L^p(\Omega)\times L^p(\R^d)$.
If further assume that $\Omega_1$, $\Omega_2$ are smooth, then $\{u_n\}$ is precompact in $L^p(\Omega)$.

\end{proposition}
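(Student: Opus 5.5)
The statement is Proposition~\ref{pr:2.4}, which combines a weak-limit identification with a compactness claim. I will prove the three parts in this order: the difference quotients have a weak limit, that limit is the gradient (so $u\in W^{1,p}(\Omega)$), and the sequence is precompact.

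\textbf{Step 1: a weak limit of the difference quotients.} Change variables $y=x+\delta_n z$. Since $R_{\delta_n}(x-y)\,dy=R(|z|^2)\,dz$, the bound in the hypothesis says that the functions
\[
w_n(x,z)=R(|z|^2)^{1/p}\chi_\Omega(x+\delta_n z)\,\frac{u_n(x+\delta_n z)-u_n(x)}{\delta_n}
\]
are bounded in $L^p(\Omega\times\R^d)$. Because $1<p<\infty$, this space is reflexive, so a subsequence satisfies $w_n\rightharpoonup w$.

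\textbf{Step 2: identifying $w$.} I test against $\varphi(x)\psi(z)$ with $\varphi\in C_c^\infty(\Omega)$ and $\psi\in C_c(\R^d)$. Fix the support of $\varphi$. For $n$ large, $x+\delta_n z\in\Omega$ for all $x\in\operatorname{supp}\varphi$ and $z$ in the support of $R$, so the cutoff $\chi_\Omega$ can be dropped. Shifting variables then moves the difference quotient onto $\varphi$:
\[
\int u_n(x)\,\frac{\varphi(x-\delta_n z)-\varphi(x)}{\delta_n}\,dx .
\]
The quotient $(\varphi(x-\delta_n z)-\varphi(x))/\delta_n$ converges strongly (uniformly) to $-z\cdot\nabla\varphi$. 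Together with $u_n\rightharpoonup u$, this gives
\[
\iint w\,\varphi\,\psi=-\iint R(|z|^2)^{1/p}\psi(z)\,u\,z\cdot\nabla\varphi .
\]
The same argument applies to the quotient $(u_n(x+\delta_n z)-u_n(x))/\delta_n$ itself, tested against $\varphi$ for each fixed $z$. This yields a distribution limit $-\int u\, z\cdot\nabla\varphi$, bounded by $C\|\varphi\|_{L^q}$ after integrating in $z$ with the weight $R$. Choosing $z$ along coordinate directions, weighted through $\psi$, I deduce that each $\partial_i u\in L^p_{\rm loc}$ with $L^p$ norm bounded by the energy bound. This bound is uniform in the compact set, so $u\in W^{1,p}(\Omega)$, and then $w=R^{1/p}z\cdot\nabla u$. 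Since the limit is uniquely identified, the whole sequence converges, not only the subsequence.

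\textbf{Step 3: precompactness.} I expect this to be the main obstacle. I will mollify: $v_n=K_{\delta_n}\ast u_n$, taken as a normalized average restricted to $\Omega$. I aim for two estimates, each controlled by the energy using a kernel comparison (a dilation of $R$ dominates a $K$-type kernel near the origin):
\[
\|v_n-u_n\|_{L^p}\le C\delta_n,\qquad \|\nabla v_n\|_{L^p}\le C .
\]
Rellich's theorem applied to $v_n$ on the Lipschitz domain then gives precompactness of $v_n$, hence of $u_n$. The difficulty is near $\partial\Omega$, where the normalization by $\int_\Omega K_\delta$ enters the gradient of $v_n$. Its derivative is $O(1/\delta)$, but this is multiplied by a difference $u_n(y)-u_n(x)$ of size $\delta$ in average, so the product stays bounded. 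This is where the smoothness of the boundary is used, through the lower bound $\int_\Omega K_\delta(x-y)\,dy\ge c$ uniformly. Alternatively, I can cite the Bourgain–Brezis–Mironescu/Ponce compactness criterion directly.
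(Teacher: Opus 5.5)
Your proof is correct. The paper does not prove this proposition; it quotes it from \cite{andreu2008nonlocal}. There, the weak limit is identified by essentially your duality argument (Steps 1--2), and precompactness is taken from the Bourgain--Brezis--Mironescu/Ponce compactness theorem, which is stated for smooth domains.

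Your Step 3 is a genuinely different route: you smooth $u_n$ by a normalized convolution at scale $\delta_n$ and apply Rellich. This is the same mechanism behind the paper's own estimate \eqref{eq:Ku:4}. What it buys is a self-contained proof that uses only the measure-density property $|B(x,r)\cap\Omega|\ge cr^d$ of a Lipschitz domain, so the extra smoothness hypothesis is unnecessary. The price is that you use that the $R_{\delta_n}$ are rescalings of one profile bounded below near the origin, whereas the BBM--Ponce criterion covers more general kernel sequences concentrating at the origin.

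To make Step 3 airtight, state that the mollifier is yours to choose. Take a $C^1$ radial bump supported in a ball $B(0,r_0)$ on which $R(|z|^2)\ge c_0>0$; such a ball exists because $R$ is nonincreasing and, by the normalization, not identically zero. Then $K_\delta\le CR_\delta$ and $|\nabla K_\delta|\le C\delta^{-d-1}\chi_{B(0,r_0\delta)}$ hold at the \emph{same} scale $\delta$. A kernel with larger support, or a dilation $R_{c\delta_n}$ with $c>1$, is not controlled by the hypothesis without an additional chaining argument. With this choice, both of your estimates follow from Jensen's inequality and the identity $\nabla v_n(x)=\bigl(\int_\Omega K_{\delta_n}(x-y)\,dy\bigr)^{-1}\int_\Omega \nabla K_{\delta_n}(x-y)\bigl[(u_n(y)-u_n(x))+(u_n(x)-v_n(x))\bigr]\,dy$.

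In Step 2, the phrase about choosing $z$ along coordinate directions is best made precise by testing with $\psi(z)=R(|z|^2)^{1/q}z_i$, where $q=p/(p-1)$. Since $\int_{\R^d}R(|z|^2)z_iz_j\,dz$ equals $1$ for $i=j$ and $0$ otherwise, this gives $\partial_i u=\int_{\R^d}w(\cdot,z)R(|z|^2)^{1/q}z_i\,dz$. That function lies in $L^p(\Omega)$ directly by H\"older, so no local-to-global step is needed. Density of the span of the products $\varphi(x)\psi(z)$ in $L^q(\Omega\times\R^d)$ then gives $w=R(|z|^2)^{1/p}z\cdot\nabla u$.
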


\section{Existence and uniqueness of minimizers}
In this section, we study the existence and uniqueness of minimizers for $\mathcal{E}_\delta$ and $\mathcal{F}_\delta$.
Some observations for $K_{\delta,i}\ast u$ are needed.
\begin{lemma}\label{le:Ku}
  Let $i=1,2$, $1< p<\infty$, $u_n, u\in L^p(\Omega)$, and $u_n\rightharpoonup u$ in $L^p(\Omega)$.
  We have, for any $\delta>0$,
  \begin{equation}\label{eq:Ku:3}
    K_{\delta,i}\ast u_n\rightarrow  K_{\delta,i}\ast u,\quad \mbox{in } L^p(\Gamma),
  \end{equation}
  as $n\rightarrow\infty$.
  Besides,
  \begin{equation}\label{eq:Ku:4}
    \int_{\Omega_i} |\nabla K_{\delta,i}\ast u|^pdx
    \leq
    \frac{C}{\delta^p}\int_{\Omega_i}\int_{\Omega_i}R_\delta(x-y)|u(y)-u(x)|^pdydx,
  \end{equation}
  where the constant $C$ depends not on $\delta$.

  If further $u_n\rightarrow u\in W^{1,p}(\Omega)$ in $L^p(\Omega)$, $\delta_n$ is a null sequence, and
  \[
  \sup_{n}\frac{1}{\delta_n^p}\int_{\Omega_i}\int_{\Omega_i}R_{\delta_n}(x-y)|u_n(y)-u_n(x)|^pdydx<\infty,
  \]
  we have
  \begin{equation}\label{eq:Ku:5}
    K_{\delta_n,i}\ast u_n\rightarrow u,\quad \mbox{in } L^p(\Gamma),
  \end{equation}
  as $n\rightarrow\infty$.
  \end{lemma}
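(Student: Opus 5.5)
We treat the three claims separately.

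\textbf{Claim \eqref{eq:Ku:3}.} Fix $\delta>0$ and write $(K_{\delta,i}\ast u)(x)=\int_{\Omega_i}k_i(x,y)u(y)\,dy$ with
\[
k_i(x,y)=\frac{K_\delta(x-y)}{\int_{\Omega_i}K_\delta(x-z)\,dz}.
\]
For $x\in\Gamma$ the denominator is bounded below by a positive constant: $\Gamma=\partial\Omega_1$ is Lipschitz (or smooth), so $\Omega_i\cap B_r(x)$ has measure $\ge c r^d$. We assume $K>0$ near $0$. Hence $k_i(x,\cdot)$ is bounded in $L^q(\Omega_i)$ uniformly in $x\in\Gamma$. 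Weak convergence $u_n\rightharpoonup u$ gives pointwise convergence $K_{\delta,i}\ast u_n(x)\to K_{\delta,i}\ast u(x)$ for every $x\in\Gamma$. Weakly convergent sequences are bounded, so $|K_{\delta,i}\ast u_n(x)|\le \|k_i(x,\cdot)\|_{L^q}\sup_n\|u_n\|_{L^p}\le C$ uniformly. Since $\Gamma$ has finite surface measure, dominated convergence yields convergence in $L^p(\Gamma)$.

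\textbf{Claim \eqref{eq:Ku:4}.} Set $w_\delta(x)=\int_{\Omega_i}K_\delta(x-y)\,dy$. We use the identity
\[
\nabla (K_{\delta,i}\ast u)(x)=\frac{1}{w_\delta(x)}\int_{\Omega_i}\nabla_x K_\delta(x-y)\bigl(u(y)-u(x)\bigr)\,dy ,
\]
which follows by differentiating the quotient: the term containing $u(x)$ cancels because
\[
\nabla_x\!\int_{\Omega_i} k_i(x,y)\,dy=0 .
\]
Note that
\[
\nabla K_\delta(s)=\frac{2s}{\delta^{d+2}}K'\!\left(\frac{|s|^2}{\delta^2}\right),
\]
so $|\nabla K_\delta(s)|\le \frac{C}{\delta}\,\tilde K_\delta(s)$ with $\tilde K(r)=|K'(r)|\sqrt r$, which is bounded and compactly supported. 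For $x\in\Omega_i$ we have $w_\delta(x)\ge c>0$ independently of $\delta$ by the interior cone/measure-density property. Hölder's inequality then gives
\[
|\nabla K_{\delta,i}\ast u(x)|^p\le \frac{C}{\delta^p}\int_{\Omega_i}\tilde K_\delta(x-y)|u(y)-u(x)|^p\,dy .
\]
Integrating over $\Omega_i$ gives the bound with $\tilde K_\delta$ in place of $R_\delta$. Comparing $\tilde K_\delta$ with $R_\delta$ requires the support of $K$ to lie where $R$ is bounded below; we take this as part of the standing kernel assumptions, e.g.\ $\operatorname{supp}K\subset\{R\ge c>0\}$. Otherwise, a standard chaining argument compares nonlocal energies with different kernels at comparable scales, and this is the step I expect to need the most care.

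\textbf{Claim \eqref{eq:Ku:5}.} Since $u\in W^{1,p}(\Omega_i)$, it has a trace on $\Gamma$. Write
\[
K_{\delta_n,i}\ast u_n-u=\bigl(K_{\delta_n,i}\ast u_n-u_n\bigr)+(u_n-u)
\]
in $\Omega_i$, and apply the trace inequality on a strip $\Omega_i^{\delta_n}=\{x\in\Omega_i:\operatorname{dist}(x,\Gamma)<\delta_n\}$:
\[
\|v\|_{L^p(\Gamma)}^p\le C\Bigl(\delta_n^{-1}\|v\|^p_{L^p(\Omega_i^{\delta_n})}+\delta_n^{p-1}\|\nabla v\|^p_{L^p(\Omega_i^{\delta_n})}\Bigr).
\]
This cannot be applied to $u_n$ directly, since $u_n$ need not be Sobolev. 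Instead, apply the global trace inequality
\[
\|v\|_{L^p(\Gamma)}\le C\|v\|_{W^{1,p}(\Omega_i)}^{\theta}\|v\|_{L^p(\Omega_i)}^{1-\theta}\quad(\theta=1/p)
\]
to $v_n=K_{\delta_n,i}\ast u_n-K_{\delta_n,i}\ast u$ and to $K_{\delta_n,i}\ast u-u$. For the first, $\|\nabla v_n\|_{L^p}$ is bounded by \eqref{eq:Ku:4}, using the uniform energy bound and the bound for $u$ obtained from $\frac{1}{\delta^p}\int\!\!\int R_\delta|u(y)-u(x)|^p\le C\|\nabla u\|^p_{L^p}$. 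Also $\|v_n\|_{L^p}\le C\|u_n-u\|_{L^p}\to0$ by Young's inequality with normalized kernel. Hence $v_n\to0$ in $L^p(\Gamma)$. For the second, $K_{\delta_n,i}\ast u\to u$ in $L^p(\Omega_i)$ with $W^{1,p}$ norm bounded, so again the interpolated trace inequality gives convergence to $0$ in $L^p(\Gamma)$.
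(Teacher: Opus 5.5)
\textbf{Verdict.} Your argument for \eqref{eq:Ku:3} is the paper's: pointwise convergence on $\Gamma$ from weak convergence, a uniform bound from the lower bound on the denominator, then dominated convergence. The problem is in \eqref{eq:Ku:4}. The paper does not prove this estimate; it takes it from Lemma~5.4 of \cite{gan2024convergence}.

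\textbf{The gradient identity is wrong.} From $\int_{\Omega_i}\nabla_x k_i(x,y)\,dy=0$ you correctly get $\nabla(K_{\delta,i}\ast u)(x)=\int_{\Omega_i}\nabla_x k_i(x,y)\bigl(u(y)-u(x)\bigr)dy$. However, $\nabla_x k_i(x,y)\neq \nabla K_\delta(x-y)/w_\delta(x)$, because the denominator also depends on $x$. The correct identity is
\[
\nabla (K_{\delta,i}\ast u)(x)=\frac{1}{w_\delta(x)}\int_{\Omega_i}\nabla K_\delta(x-y)\bigl(u(y)-u(x)\bigr)dy-\frac{\nabla w_\delta(x)}{w_\delta(x)^2}\int_{\Omega_i}K_\delta(x-y)\bigl(u(y)-u(x)\bigr)dy .
\]
The term you dropped vanishes only where $w_\delta$ is constant, i.e.\ at distance more than $O(\delta)$ from $\partial\Omega_i$. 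It therefore lives exactly in the layer along $\Gamma$ that the lemma is about, where $|\nabla w_\delta|$ is of order $\delta^{-1}$.

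\textbf{The repair leads to a second gap.} The error itself is fixable. Since $|\nabla w_\delta|\le\int_{\R^d}|\nabla K_\delta|\le C/\delta$ and $w_\delta\ge c$ for small $\delta$, H\"older gives your pointwise bound with kernel $\tilde K_\delta+K_\delta$ in place of $\tilde K_\delta$. You must then compare both $\tilde K_\delta$ and $K_\delta$ with $R_\delta$. The paper's standing assumptions impose no relation between $K$ and $R$. So $\tilde K+K\le CR$ needs an extra hypothesis such as $\operatorname{supp}K\subset\{R\ge c>0\}$. Without it, you need an actual proof of the chaining lemma: comparability of the $\int_{\Omega_i}\int_{\Omega_i}$ energies at different horizons. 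That proof must use the Lipschitz regularity of $\Omega_i$ and small $\delta$, so that the chains of intermediate points stay inside $\Omega_i$ with measure of order $\delta^d$. You flagged this step but did not supply it. As written, \eqref{eq:Ku:4} is proved only under the extra support hypothesis, and only after correcting the identity.

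\textbf{Your route to \eqref{eq:Ku:5} is correct but different.} Granting \eqref{eq:Ku:4}, your argument works. The paper shows $K_{\delta_n,i}\ast u_n$ is bounded in $W^{1,p}(\Omega_i)$ and converges to $u$ in $L^p(\Omega_i)$ through its splitting $I_1+I_2+I_3$. Hence the convergence is weak in $W^{1,p}(\Omega_i)$, and the paper concludes by compactness of the trace map. You instead split
\[
K_{\delta_n,i}\ast u_n-u=K_{\delta_n,i}\ast(u_n-u)+(K_{\delta_n,i}\ast u-u)
\]
and apply the multiplicative trace inequality $\|v\|_{L^p(\Gamma)}\le C\|v\|^{1/p}_{W^{1,p}(\Omega_i)}\|v\|^{1-1/p}_{L^p(\Omega_i)}$ to each piece. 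This avoids compactness and gives a quantitative bound in terms of $\|u_n-u\|_{L^p}$ and $\|K_{\delta_n,i}\ast u-u\|_{L^p}$. You do assert $K_{\delta_n,i}\ast u\to u$ in $L^p(\Omega_i)$ without proof. That fact is standard, and the paper handles it through $I_2$ and $I_3$.
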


\begin{proof}
Let $\delta>0$ be fixed.
For any $x\in\Gamma$,
since $u_n\rightharpoonup u$ in $L^p(\Omega)$,
\[
K_{\delta,i}\ast u_n(x)
=\frac{\int_{\Omega_i}K_\delta(x-y)u_n(y)dy}{\int_{\Omega_i}K_\delta(x-y)dy}
\rightarrow \frac{\int_{\Omega_i}K_\delta(x-y)u(y)dy}{\int_{\Omega_i}K_\delta(x-y)dy}
=K_{\delta,i}\ast u(x),
\]
as $n\rightarrow\infty$.
Observe that
\[
|K_{\delta,i}\ast u_n(x)|
\leq C\left|\int_{\Omega_i}K_\delta(x-y)u_n(y)dy\right|
\leq C\int_{\Omega_i}\left|u_n(y)\right|dy
\leq C.
\]
Thus \eqref{eq:Ku:3} follows from Lebesgue's dominated convergence theorem.

\eqref{eq:Ku:4} has been proven in \cite[Lemma 5.4]{gan2024convergence}. We omit it here.
\eqref{eq:Ku:5} is a corollary of \eqref{eq:Ku:4}.
In fact,
\[
\int_{\Omega_i}|K_{\delta_n,i}\ast u_n|^pdx
\leq C\int_{\Omega_i}\left|\int_{\Omega_i}K_{\delta_n}(x-y)u_n(y)dy\right|^pdx
\leq C\int_{\Omega_i}|u_n|^pdx.
\]
This together with \eqref{eq:Ku:4} imply that
$K_{\delta_n,i}\ast u_n$ is uniformly bounded in $W^{1,p}(\Omega_i)$ and admits a weakly convergent subsequences in $W^{1,p}(\Omega_i)$.
Besides,
\begin{align*}
  \|K_{\delta_n,i}\ast u_n-u\|_{L^p(\Omega_i)}
  \leq \|K_{\delta_n,i}\ast u_n-K_{\delta_n,i}\ast u\|_{L^p(\Omega_i)}
  +\left\|K_{\delta_n,i}\ast u-\frac{\int_{\R^d}K_{\delta_n}(x-y)dy}{\int_{\Omega_i}K_{\delta_n}(x-y)dy}u\right\|_{L^p(\Omega_i)}\\
  +\left\|\frac{\int_{\R^d}K_{\delta_n}(x-y)dy}{\int_{\Omega_i}K_{\delta_n}(x-y)dy}u-u\right\|_{L^p(\Omega_i)}
  =:I_1+I_2+I_3,
\end{align*}
where $I_1\rightarrow 0$ due to $u_n\rightarrow u$ in $L^p(\Omega_i)$,
$I_2\rightarrow 0$ due to the classical result on mollifier,
and $I_3\rightarrow 0$ due to Lebesgue’s dominated convergence theorem.
Consequently,
\[
K_{\delta_n,i}\ast u_n\rightharpoonup u,\quad \mbox{in } W^{1,p}(\Omega_i).
\]
Then \eqref{eq:Ku:5} follow from Sobolev's embedding theorem and the trace inequality.
\end{proof}

A key tool for the existence and uniqueness of minimizers for $\mathcal{E}_\delta$ and $\mathcal{F}_\delta$ is the nonlocal Poincar\'{e} inequality of the following form.
It is a corollary of Proposition \ref{pr:2.3}.
\begin{lemma}\label{le:poincare}
  Let $u\in L^p(\Omega)$, $1\leq p<\infty$, and $\delta>0$ be fixed. There exists a constant $C>0$ depending only on $R$, $\Omega_1$, $\Omega_2$, $p$, and $\delta$, such that
  \begin{align}\label{eq:poincare}
  \begin{split}
    \frac{1}{C}\int_{\Omega} |u|^pdx
    \leq \int_{\Omega_1}\int_{\Omega_1}R_\delta(x-y)|u(y)-u(x)|^pdydx
    &+\int_{\Omega_2}\int_{\Omega_2}R_\delta(x-y)|u(y)-u(x)|^pdydx\\
    &+\int_\Gamma\left|K_{\delta,1}\ast u-K_{\delta,2}\ast u\right|^pdS
    +\left|\int_{\Omega} udx\right|^p.
    \end{split}
  \end{align}
\end{lemma}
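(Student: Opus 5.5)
We argue by contradiction, in the usual compactness style for Poincaré inequalities, with Proposition \ref{pr:2.3} applied separately on $\Omega_1$ and $\Omega_2$. Suppose \eqref{eq:poincare} fails for every $C$. Then there are $u_n\in L^p(\Omega)$ with $\int_\Omega|u_n|^pdx=1$ such that the right-hand side of \eqref{eq:poincare}, evaluated at $u_n$, is at most $1/n$.

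\textbf{Step 1: each piece is nearly constant.} Write $c_{n,i}=\frac{1}{|\Omega_i|}\int_{\Omega_i}u_ndx$. Proposition \ref{pr:2.3}, with $\Omega$ replaced by $\Omega_i$ and $\delta$ fixed, gives
\[
\int_{\Omega_i}|u_n-c_{n,i}|^pdx\to 0,\qquad i=1,2.
\]
We need Proposition \ref{pr:2.3} on each $\Omega_i$, which is only assumed as a domain in the cited result. I would take $\Omega_1,\Omega_2$ to be connected Lipschitz domains, implicitly assumed in the setting. This connectedness hypothesis is the one point I expect to be delicate. If $\Omega_i$ had several components, the constants could differ between components and the interface term would not control them. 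So I would state connectedness explicitly.

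\textbf{Step 2: pass to the limit in the constants.} By Hölder's inequality, $|c_{n,i}|\le |\Omega_i|^{-1/p}\|u_n\|_{L^p}\le C$. Extract a subsequence with $c_{n,i}\to c_i$. Then $u_n\to v:=c_1\chi_{\Omega_1}+c_2\chi_{\Omega_2}$ strongly in $L^p(\Omega)$, since $|\Gamma|=0$. Consequently
\[
\int_\Omega|v|^pdx=1\quad\text{and}\quad \int_\Omega v\,dx=\lim_{n}\int_\Omega u_n\,dx=0.
\]

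\textbf{Step 3: use the interface term.} By \eqref{eq:Ku:3} of Lemma \ref{le:Ku} (strong convergence implies weak), $K_{\delta,i}\ast u_n\to K_{\delta,i}\ast v$ in $L^p(\Gamma)$. The normalized convolution of a constant on $\Omega_i$ is that constant, so $K_{\delta,i}\ast v\equiv c_i$ on $\Gamma$. This uses that $\int_{\Omega_i}K_\delta(x-y)dy>0$ for $x\in\Gamma$, which follows from $K(0)>0$ together with monotonicity and the Lipschitz boundary. Hence
\[
|c_1-c_2|^p|\Gamma|=\lim_{n}\int_\Gamma|K_{\delta,1}\ast u_n-K_{\delta,2}\ast u_n|^pdS=0.
\]
Since $|\Gamma|>0$, we get $c_1=c_2=:c$.

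\textbf{Step 4: conclude.} From Step 2, $c|\Omega|=0$, so $c=0$ and $v=0$. This contradicts $\|v\|_{L^p(\Omega)}=1$, which proves \eqref{eq:poincare}. The constant $C$ obtained this way depends on $\delta$, $R$, $K$, $p$ and the domains, as in the statement.

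A constructive alternative avoids the contradiction argument. Bound $|c_1-c_2|$ by $\|K_{\delta,1}\ast u-K_{\delta,2}\ast u\|_{L^p(\Gamma)}$ plus $\|K_{\delta,i}\ast(u-c_i)\|_{L^p(\Gamma)}$ for $i=1,2$. The latter are controlled by $\|u-c_i\|_{L^p(\Omega_i)}$ through the boundedness estimate used in the proof of Lemma \ref{le:Ku}. Combining this with the mean-value term then bounds $|c_1|$ and $|c_2|$.
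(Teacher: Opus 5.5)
Your proof is correct and follows essentially the same route as the paper: argue by contradiction with normalized $u_n$, apply Proposition \ref{pr:2.3} on each $\Omega_i$, use \eqref{eq:Ku:3} to force $c_1=c_2$, and use the mean term to get $c_1=c_2=0$. Tracking the means $c_{n,i}$ gives you strong convergence $u_n\to v$ with $\|v\|_{L^p(\Omega)}=1$, which is what actually makes the final contradiction valid; the paper only extracts a weak limit and leaves this step implicit. Since you never use weak compactness in $L^p$, your argument also goes through for $p=1$.
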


\begin{proof}
  We prove it by contradiction. Assume that \eqref{eq:poincare} does not hold.
  Then for any $n\in\mathbb{N}$, there exists  $u_{n}\in L^p(\Omega)$, such that
  \begin{align*}
    \int_\Omega & |u_n|^pdx\\
    \geq &n
    \left(\int_{\Omega_1}\int_{\Omega_1}R_\delta(x-y)|u_{n}(y)-u_{n}(x)|^pdydx
    +\int_{\Omega_2}\int_{\Omega_2}R_\delta(x-y)|u_{n}(y)-u_{n}(x)|^pdydx\right.\\
    &\qquad\qquad\qquad\qquad\qquad\qquad\qquad\qquad
    \left.+\int_\Gamma\left|K_{\delta,1}\ast u_{n}-K_{\delta,2}\ast u_{n}\right|^pdS
    +\left|\int_{\Omega} u_ndx\right|^p \right),
  \end{align*}
  Let $v_{n}=\frac{1}{\|u_n\|_{L^p(\Omega)}} u_{n}$.
  We have $\int_{\Omega}|v_{n}|^pdx=1$
  and
    \begin{align*}
    \frac{1}{n}
    \geq
    \int_{\Omega_1}\int_{\Omega_1}R_\delta(x-y)|v_{n}(y)-v_{n}(x)|^pdydx
    &+\int_{\Omega_2}\int_{\Omega_2}R_\delta(x-y)|v_{n}(y)-v_{n}(x)|^pdydx\\
    &+\int_\Gamma\left|K_{\delta,1}\ast v_{n}-K_{\delta,2}\ast v_{n}\right|^pdS
    +\left|\int_{\Omega} v_ndx\right|^p.
  \end{align*}
  We find a subsequence of $\{v_{n}\}$ (still dented by itself) and a function $v\in L^p(\Omega)$ such that
  \begin{equation}\label{eq:le:poincare:1}
    v_{n} \rightharpoonup v, \quad \mbox{in } L^p(\Omega),
  \end{equation}
  and
  \begin{subequations}\label{eq:le:poincare:2}
  \begin{align}
    &\lim_{n\rightarrow\infty} \int_{\Omega_i}\int_{\Omega_i}R_\delta(x-y)|v_{n}(y)-v_{n}(x)|^pdydx=0,\quad i=1,2,\label{eq:le:poincare:2a}\\
    &\lim_{n\rightarrow\infty}
    \int_\Gamma\left|K_{\delta,1}\ast v_{n}-K_{\delta,2}\ast v_{n}\right|^pdS=0,\label{eq:le:poincare:2c}\\
    &\lim_{n\rightarrow\infty}
    \int_{\Omega} v_{n}dx=0. \label{eq:le:poincare:2d}
  \end{align}
  \end{subequations}
  It follows from \eqref{eq:le:poincare:1}, \eqref{eq:le:poincare:2a}, the weak lower semi-continuity, and Proposition \ref{pr:2.3} that
  \begin{align*}
    \int_{\Omega_i}\left|v-\frac{1}{|\Omega_i|}\int_{\Omega_i}vdx\right|^pdx
    &\leq \liminf_{n\rightarrow\infty}
    \int_{\Omega_i}\left|v_{n}-\frac{1}{|\Omega_i|}\int_{\Omega_i}v_{n}dx\right|^pdx\\
    &\leq \liminf_{n\rightarrow\infty} C \int_{\Omega_i}\int_{\Omega_i}R_\delta(x-y)|v_{n}(y)-v_{n}(x)|^pdydx=0,
  \end{align*}
  from which we have $v=c_i$ in $\Omega_i$ for constant $c_i$, $i=1,2$.
  Consequently, \eqref{eq:Ku:3} implies
    \[
  K_{\delta,1}\ast v_{n}-K_{\delta,2}\ast v_{n}\rightarrow c_1-c_2, \quad \mbox{in } L^p(\Gamma).
  \]
Combining it with \eqref{eq:le:poincare:2c}, we see that $c_1=c_2$.
  Passing to the limit in \eqref{eq:le:poincare:2d}, we obtain that $c_1=c_2=0$.
  This contradicts $\int_{\Omega}|v_{n}|^pdx=1$.
\end{proof}

With nonlocal Poincar\'{e}'s inequality \eqref{eq:poincare}, we are able to show  the coercivity of functional $\mathcal{E}_\delta$.
Then the existence of minimizers follows.
\begin{theorem}\label{th:existence_nonlocal}
  Let $\lambda_1, \lambda_2>0$, $f\in L^2(\Omega)$, and $g\in L^2(\Gamma)$. For any $\delta>0$, the nonlocal functional $\mathcal{E}_\delta$ admits a unique minimizer in $L^2_0(\Omega)$.
\end{theorem}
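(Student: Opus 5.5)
We argue by the direct method of the calculus of variations on the reflexive space $L^2_0(\Omega)$, equipped with its weak topology. Throughout, $\delta>0$ is fixed.

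\textbf{Step 1: coercivity.} Write $D_\delta(u)$ for the double integral term. Since $\Lambda$ splits into the independent nonlocal diffusions on $\Omega_1$ and $\Omega_2$, we have
\[
D_\delta(u)\ \ge\ \frac{\min(\lambda_1,\lambda_2)}{2\delta^2}\sum_{i=1,2}\int_{\Omega_i}\int_{\Omega_i}R_\delta(x-y)|u(y)-u(x)|^2\,dy\,dx.
\]
The linear terms are controlled as follows. Clearly $|\int_\Omega fu\,dx|\le \|f\|_{L^2}\|u\|_{L^2}$. For the interface term, the normalizing denominator $\int_{\Omega_1}K_\delta(x-y)dy$ is bounded below by a positive constant on $\Gamma$ for fixed $\delta$; this follows from the Lipschitz/cone property of $\Gamma$, and the paper already uses it in Lemma \ref{le:Ku}. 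Together with $K_\delta\le C\delta^{-d}$, this gives $|K_{\delta,1}\ast u(x)|\le C_\delta\|u\|_{L^1}$ for $x\in\Gamma$. Hence
\[
\Bigl|\int_\Gamma gK_{\delta,1}\ast u\,dS\Bigr|\le C_\delta\|g\|_{L^2(\Gamma)}\|u\|_{L^2(\Omega)}.
\]
For $u\in L^2_0(\Omega)$ the mean term vanishes, so Lemma \ref{le:poincare} with $p=2$ yields
\[
\|u\|_{L^2}^2\le C_\delta\Bigl(D_\delta(u)+\frac1\delta\int_\Gamma|K_{\delta,1}\ast u-K_{\delta,2}\ast u|^2dS\Bigr).
\]
Combining the three bounds gives $\mathcal{E}_\delta(u)\ge c\|u\|^2_{L^2}-C\|u\|_{L^2}$. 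Therefore $\mathcal{E}_\delta$ is bounded below and every minimizing sequence is bounded in $L^2$.

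\textbf{Step 2: lower semicontinuity and existence.} Take a minimizing sequence $u_n\in L^2_0$. By Step 1 it is bounded, so a subsequence satisfies $u_n\rightharpoonup u$ in $L^2(\Omega)$. The constraint set $L^2_0$ is weakly closed because $\int_\Omega u\,dx$ is a continuous linear functional.

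Each term passes to the limit as follows.
\begin{itemize}
\item The quadratic form $D_\delta$ is convex and strongly continuous on $L^2$: indeed $D_\delta(u)\le C\delta^{-2}\|R_\delta\|_{L^1}\|u\|^2_{L^2}$. Hence it is weakly lower semicontinuous.
\item The term $\int fu$ converges by weak convergence.
\item By \eqref{eq:Ku:3} of Lemma \ref{le:Ku}, $K_{\delta,i}\ast u_n\to K_{\delta,i}\ast u$ strongly in $L^2(\Gamma)$. So both interface terms converge; the penalty term in fact converges, not merely lower semicontinuously.
\end{itemize}
Consequently $\mathcal{E}_\delta(u)\le\liminf_n\mathcal{E}_\delta(u_n)=\inf\mathcal{E}_\delta$, and $u$ is a minimizer.

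\textbf{Step 3: uniqueness via strict convexity.} The functional is the sum of a nonnegative quadratic form $Q(u)=D_\delta(u)+\frac1\delta\int_\Gamma|K_{\delta,1}\ast u-K_{\delta,2}\ast u|^2dS$ and a linear functional. It therefore suffices to show that $Q$ is strictly convex on $L^2_0$, i.e.\ that $Q(w)>0$ for every $w\in L^2_0\setminus\{0\}$. This is exactly the content of Lemma \ref{le:poincare}: $Q(w)=0$ together with $\int_\Omega w=0$ forces $w=0$.

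Now suppose $u,v$ are two minimizers and set $w=u-v$. The parallelogram identity gives
\[
\mathcal{E}_\delta\Bigl(\tfrac{u+v}2\Bigr)=\tfrac12\mathcal{E}_\delta(u)+\tfrac12\mathcal{E}_\delta(v)-\tfrac14Q(w).
\]
Minimality of $u$ and $v$ then forces $Q(w)=0$, hence $w=0$.

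\textbf{Main obstacle.} The one delicate point is the uniform lower bound on $\int_{\Omega_1}K_\delta(x-y)dy$ for $x\in\Gamma$. This bound is what makes $K_{\delta,1}\ast u$ a bounded linear map $L^2(\Omega)\to L^2(\Gamma)$, which is needed in Step 1. If it is not already implicit in the setting, I would obtain it from an interior cone condition of the Lipschitz domain $\Omega_1$ together with $K(0)>0$. Everything else is routine.
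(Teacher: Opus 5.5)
Your proposal is correct and takes essentially the same route as the paper. Coercivity comes from the nonlocal Poincar\'{e} inequality of Lemma~\ref{le:poincare} combined with bounds on the two linear terms, existence follows from the direct method with \eqref{eq:Ku:3} handling the interface terms, and uniqueness follows from strict convexity of the quadratic part, which again rests on Lemma~\ref{le:poincare}. The only differences are cosmetic: you use the midpoint parallelogram identity instead of the paper's identity for general $t\in(0,1)$, and you make explicit two facts the paper uses implicitly, namely the lower bound on $\int_{\Omega_1}K_\delta(x-y)\,dy$ for $x\in\Gamma$ and the weak lower semicontinuity of the double integral.
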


\begin{proof}
  Let $u\in L^2_0(\Omega)$.
  The definition of $\mathcal{E}_\delta$ and nonlocal Poincar\'{e}'s inequality \eqref{eq:poincare} imply
  \begin{align*}
    \int_\Omega |u|^2dx
    \leq C\mathcal{E}_\delta(u)
    +C\int_{\Omega}fudx + C\int_\Gamma g K_{\delta,1}\ast u dS.
  \end{align*}
  Utilizing Cauchy's inequality and the fact
  $\int_\Gamma |K_{\delta,1}\ast u|^2dS\leq C\int_{\Omega_1}|u|^2dx$,
  we have
  \begin{align*}
  \int_{\Omega}fudx + \int_\Gamma gK_{\delta,1}\ast u dS
  \leq& \frac{4}{\epsilon}\int_\Omega |f|^2dx
  +\epsilon\int_\Omega |u|^2dx
  +\frac{4}{\epsilon}\int_\Gamma |g|^2dS
  +C\epsilon \int_{\Omega_1}|u|^2dx.
  \end{align*}
  Combining two inequalities and choosing a small $\epsilon$ yield the coercivity of $\mathcal{E}_\delta$, i.e.,
    \begin{align}\label{eq:coercivity}
  \begin{split}
    \int_\Omega|u|^2dx
    \leq C\mathcal{E}_\delta(u)+C\int_\Omega |f|^2dx + C\int_\Gamma |g|^2dS.
  \end{split}
  \end{align}
There exists a minimizing sequence $u_k\in L_0^2(\Omega)$, such that
\[
\lim_{k\rightarrow\infty}\mathcal{E}_\delta(u_k)=\inf_{L^2(\Omega)} \mathcal{E}_\delta(u),
\]
and $\mathcal{E}_\delta(u_k)$ is uniformly bounded in $L^2(\Omega)$.
\eqref{eq:coercivity} further implies the existence of a subsequence of $\{u_k\}$ (still denoted by itself) and a function $u\in L^2(\Omega)$, such that
\begin{align*}
  u_{k} \rightharpoonup {u}, \quad \mbox{in } L^2(\Omega).
\end{align*}
By \eqref{eq:Ku:3},
  \[
  K_{\delta,i}\ast u_k\rightarrow K_{\delta,i}\ast u, \quad \mbox{in } L^2(\Gamma),
  \]
  as $k\rightarrow \infty$.
 By the weak lower semi-continuity,
\[
\mathcal{E}_\delta(u)
\leq \liminf_{k\rightarrow\infty}\mathcal{E}_\delta(u_k).
\]
Consequently,
\[
\inf_{L^2(\Omega)} \mathcal{E}_\delta(u)
\leq \mathcal{E}_\delta(u)
\leq \liminf_{k\rightarrow\infty}\mathcal{E}_\delta(u_k)
=\inf_{L^2(\Omega)} \mathcal{E}_\delta(u).
\]
Thus $u$ is a minimizer of $\mathcal{E}_\delta$ in $L^2(\Omega)$.
Clearly, $u\in L^2_0(\Omega)$.

To prove the uniqueness of the minimizer, we only need to show that
\[
E_\delta(u)=
\frac{1}{\delta^2}\sum_{i=1,2}\int_{\Omega_i}\int_{\Omega_i}\Lambda(x,y) R_\delta(x-y)|u(y)-u(x)|^2dydx
+\frac{1}{\delta}
\int_\Gamma\left|K_{\delta,1}\ast u-K_{\delta,2}\ast u\right|^2dS
\]
is strictly convex on $L^2_0(\Omega)$.
In fact, for any $u,v\in L^2_0(\Omega)$, $w:=u-v$, and $0<t<1$,
it follows from the elementary equality
\[
t|x|^2+(1-t)|y|^2-|tx+(1-t)y|^2=t(1-t)|x-y|^2,\quad x,y\in\R,
\]
and nonlocal Poincar\'{e}'s inequality \eqref{eq:poincare}
that
\begin{align*}
tE_\delta(u)+(1-t)E_\delta(v)-E_\delta(tu+(1-t)v)
=t(1-t)E_\delta(w)
\geq Ct(1-t)\int_\Omega |w|^2dx>0,
\end{align*}
if $u\neq v$.
This completes the proof.
\end{proof}

The proof also works for the nonlocal functional $\mathcal{F}_\delta$.
The only difference is that we use the elementary equality
\begin{align*}
  (|x|^{p-2}x-|y|^{p-2}y)(x-y)\geq
  \begin{cases}
    2^{2-p}|x-y|^p, & \mbox{if } p\geq 2, \\
    (p-1)(1+|x|^2+|y|^2)^{\frac{p-2}{2}}|x-y|^2, & \mbox{if } 1\leq p\leq 2,
  \end{cases}
\end{align*}
where $x,y\in \R$,
for the uniqueness of minimizers.

\begin{theorem}
  Let $1<p<\infty$, $q=\frac{p}{p-1}$, $\lambda_1, \lambda_2>0$, $f\in L^q(\Omega)$, and $g\in L^q(\Gamma)$. For any $\delta>0$, the nonlocal functional $\mathcal{F}_\delta$ admits a unique minimizer in $L^p_0(\Omega)$.
\end{theorem}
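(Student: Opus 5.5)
The argument follows the proof of Theorem \ref{th:existence_nonlocal}, with the exponent $2$ replaced by $p$, and has three steps: coercivity, the direct method, and strict convexity.

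\textbf{Coercivity.} Fix $\delta>0$ and $u\in L^p_0(\Omega)$. Since $\int_\Omega u\,dx=0$, the nonlocal Poincar\'{e} inequality \eqref{eq:poincare} bounds $\int_\Omega|u|^pdx$ by $C$ times the sum of the two double integrals and the interface term. As $\lambda_1,\lambda_2,\gamma>0$, this sum is controlled by
\[
C\mathcal{F}_\delta(u)+C\int_\Omega fu\,dx+C\int_\Gamma gK_{\delta,1}\ast u\,dS .
\]
The linear terms are handled with H\"older's and Young's inequalities with exponents $p,q$:
\[
\int_\Omega fu\,dx\le \epsilon\|u\|_{L^p(\Omega)}^p+C_\epsilon\|f\|_{L^q(\Omega)}^q ,
\]
and similarly on $\Gamma$. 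For the interface term we need $\int_\Gamma|K_{\delta,1}\ast u|^pdS\le C\int_{\Omega_1}|u|^pdx$. This holds by Jensen's inequality, because for fixed $\delta$ the normalizing factor $\int_{\Omega_1}K_\delta(x-y)dy$ is bounded below on $\Gamma$ and $K_\delta$ is bounded. Taking $\epsilon$ small gives
\[
\|u\|_{L^p(\Omega)}^p\le C\mathcal{F}_\delta(u)+C\|f\|^q_{L^q(\Omega)}+C\|g\|^q_{L^q(\Gamma)} .
\]
In particular $\mathcal{F}_\delta$ is bounded below, since $\mathcal{F}_\delta(u)\ge -C$.

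\textbf{Existence.} Take a minimizing sequence $u_k\in L^p_0(\Omega)$. By coercivity it is bounded in $L^p$, and since $1<p<\infty$ reflexivity gives $u_k\rightharpoonup u$ in $L^p(\Omega)$ along a subsequence. The weak limit keeps zero mean, so $u\in L^p_0(\Omega)$. Each term then passes to the limit as follows.
\begin{itemize}
\item The double integral is convex and strongly continuous on $L^p(\Omega\times\Omega)$, so it is weakly lower semicontinuous.
\item $\int_\Omega fu_k\,dx\to\int_\Omega fu\,dx$ by weak convergence.
\item By \eqref{eq:Ku:3}, $K_{\delta,i}\ast u_k\to K_{\delta,i}\ast u$ strongly in $L^p(\Gamma)$. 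Hence the $g$-term and the term $\frac{\gamma}{p}\int_\Gamma|K_{\delta,1}\ast u_k-K_{\delta,2}\ast u_k|^pdS$ converge.
\end{itemize}
Therefore $\mathcal{F}_\delta(u)\le\liminf_k\mathcal{F}_\delta(u_k)=\inf\mathcal{F}_\delta$, so $u$ is a minimizer.

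\textbf{Uniqueness.} This is the step requiring care, since the identity $t|x|^2+(1-t)|y|^2-|tx+(1-t)y|^2=t(1-t)|x-y|^2$ used before is unavailable for $p\neq2$. The linear terms do not affect convexity, so it suffices to show that
\[
F_\delta(u)=\frac{1}{p\delta^p}\sum_i\lambda_i\int_{\Omega_i}\int_{\Omega_i}R_\delta|u(y)-u(x)|^p\,dydx+\frac{\gamma}{p}\int_\Gamma|K_{\delta,1}\ast u-K_{\delta,2}\ast u|^p\,dS
\]
is strictly convex on $L^p_0(\Omega)$. Its convexity is clear, since $s\mapsto|s|^p$ is convex and composed with linear maps.

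Suppose $u\ne v$ are both minimizers. Then $w=\tfrac12(u+v)$ is also a minimizer. For every $t\in(0,1)$, the convex function $\phi(t)=F_\delta(tu+(1-t)v)$ then satisfies $\phi(t)=t\phi(1)+(1-t)\phi(0)$, i.e.\ $\phi$ is affine on $[0,1]$. Because $s\mapsto|s|^p$ is strictly convex for $p>1$, the pointwise inequalities in the integrands must be equalities almost everywhere. Where the kernels are positive, this forces
\[
u(y)-u(x)=v(y)-v(x)\ \text{ for } R_\delta(x-y)>0,\ x,y\in\Omega_i ,
\]
and
\[
K_{\delta,1}\ast u-K_{\delta,2}\ast u=K_{\delta,1}\ast v-K_{\delta,2}\ast v\ \text{ a.e. on } \Gamma .
\]
Alternatively, one can use the stated monotonicity inequality for $|x|^{p-2}x$ with the Euler--Lagrange equations of $u$ and $v$. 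Either way, with $z=u-v$ the double integrals of $z$ and the interface term of $z$ all vanish. Since $\int_\Omega z\,dx=0$, Lemma \ref{le:poincare} gives $\int_\Omega|z|^p\,dx=0$, so $u=v$.

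The main obstacle is the rigorous extraction of the equality case. I would handle it by the pointwise strict convexity argument above, which avoids differentiability issues for $1<p<2$.
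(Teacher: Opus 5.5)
Your proposal is correct and follows the paper's route. The paper simply transfers the proof of Theorem~\ref{th:existence_nonlocal}: coercivity from Lemma~\ref{le:poincare} with Young's inequality, the direct method using \eqref{eq:Ku:3}, and uniqueness via strict convexity closed by Lemma~\ref{le:poincare} applied to $u-v$, which is what you do. The only difference is that you get the equality case from the pointwise strict convexity of $s\mapsto|s|^p$, whereas the paper invokes the monotonicity inequality for $s\mapsto|s|^{p-2}s$; both force the nonlocal and interface terms of $z=u-v$ to vanish, and your version is a valid, slightly more elementary way to get there.
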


\section{Convergence of minimizers}
In this section, we prove the convergence of minimizers for $\mathcal{E}_{\delta_n}$ and $\mathcal{F}_{\delta_n}$ as $n\rightarrow\infty$.
This is achieved via the $\Gamma$-convergence and compactness of nonlocal functionals.
The associated results for nonlocal $p$-Laplacians have been well-studied \cite{slepcev2019analysis}.
Here, we only need to focus on the terms in $\mathcal{E}_{\delta_n}$ and $\mathcal{F}_{\delta_n}$ that involve the interface $\Gamma$.
We begin with the $\Gamma$-convergence of $\mathcal{E}_{\delta_n}$.

\begin{theorem}\label{th:Gamma_converge_delta}
  Let $\lambda_1, \lambda_2>0$, $f\in L^2(\Omega)$, and $g\in L^2(\Gamma)$. Then for any null sequence $\delta_n$,
  \begin{equation*}
    \mathcal{E}_{\delta_n}\stackrel{\Gamma}{\longrightarrow }\mathcal{E},
  \end{equation*}
  in $L^2(\Omega)$ as $n\rightarrow\infty$.
\end{theorem}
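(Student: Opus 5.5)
We prove the liminf and limsup inequalities separately. The bulk term is handled by the known $\Gamma$-convergence of nonlocal Dirichlet energies on each $\Omega_i$ (cf.\ \cite{slepcev2019analysis} and Proposition \ref{pr:2.4}). The interface terms are handled by Lemma \ref{le:Ku}. The penalty term $\frac1\delta\int_\Gamma|K_{\delta,1}\ast u-K_{\delta,2}\ast u|^2dS$ is nonnegative, so it can simply be dropped in the liminf. It is also what forces continuity of the limit across $\Gamma$.

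\emph{Liminf inequality.} Let $u_n\to u$ in $L^2(\Omega)$. We may assume $\liminf\mathcal{E}_{\delta_n}(u_n)<\infty$; passing to a subsequence, $\sup_n\mathcal{E}_{\delta_n}(u_n)<\infty$ and each $u_n\in L^2_0(\Omega)$, so $u\in L^2_0(\Omega)$. The linear terms are bounded by $C\|u_n\|_{L^2}$ plus $\|g\|_{L^2(\Gamma)}\|K_{\delta_n,1}\ast u_n\|_{L^2(\Gamma)}$. The bound $\|K_{\delta,1}\ast u\|_{L^2(\Gamma)}\le C(\delta)\|u\|_{L^2}$ from the proof of Theorem \ref{th:existence_nonlocal} depends on $\delta$, so instead I would use the trace inequality together with \eqref{eq:Ku:4} and the $L^2$ bound on $K_{\delta,1}\ast u_n$ from the proof of Lemma \ref{le:Ku}. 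This gives $\|K_{\delta_n,1}\ast u_n\|_{L^2(\Gamma)}\le C(\|u_n\|_{L^2}+D_n^{1/2})$, where $D_n$ denotes the scaled nonlocal energy. Hence $D_n$ and $\frac1{\delta_n}\int_\Gamma|K_{\delta_n,1}\ast u_n-K_{\delta_n,2}\ast u_n|^2$ are both uniformly bounded. Applying Proposition \ref{pr:2.4} on each $\Omega_i$ separately gives $u|_{\Omega_i}\in H^1(\Omega_i)$. By weak lower semicontinuity of the $L^2$ norm applied to the weakly converging difference quotients, together with the normalization $\int R(|z|^2)|z_1|^2dz=1$ (so that $\int R(|z|^2)|z\cdot\xi|^2dz=|\xi|^2$), we get
\[
\liminf_n \frac{1}{2\delta_n^2}\int_{\Omega_i}\int_{\Omega_i}\lambda_i R_{\delta_n}|u_n(y)-u_n(x)|^2\ge\frac{\lambda_i}{2}\int_{\Omega_i}|\nabla u|^2 .
\]
(The exact constant is fixed by the normalization of $R$, which I trust is chosen to make this match.) By \eqref{eq:Ku:5}, $K_{\delta_n,i}\ast u_n\to u_i$ in $L^2(\Gamma)$, where $u_i$ is the trace of $u$ from $\Omega_i$. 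Since the penalty is bounded, $\int_\Gamma|K_{\delta_n,1}\ast u_n-K_{\delta_n,2}\ast u_n|^2\le C\delta_n\to0$, hence $u_1=u_2$ on $\Gamma$. A piecewise $H^1$ function with matching traces lies in $H^1(\Omega)$, so $u\in\mathcal U(\Omega)$; the Neumann condition is natural and imposes no constraint in the weak sense. Finally $\int f u_n\to\int fu$ and $\int_\Gamma gK_{\delta_n,1}\ast u_n\to\int_\Gamma gu$, which gives the liminf.

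\emph{Limsup inequality.} For $u\notin\mathcal U(\Omega)$ there is nothing to prove. For $u\in\mathcal U(\Omega)$, I would first take $u$ smooth, say $u\in C^1(\overline\Omega)$ with mean zero; the general case follows by density in $H^1$ and a diagonal argument, using continuity of $\mathcal E$ in $H^1$. Take the constant recovery sequence $u_n=u$. The bulk term converges to $\frac12\int\lambda|\nabla u|^2$ by Taylor expansion. Because $\Lambda$ only couples pairs in the same $\Omega_i$, the terms near $\Gamma$ and $\partial\Omega$ only lose mass, which is harmless for a limsup; the integrand is still bounded by $|\nabla u|^2R(|z|^2)|z|^2$, so dominated convergence applies. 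The linear interface term converges by \eqref{eq:Ku:5}. The main obstacle is the penalty term: we need $\int_\Gamma|K_{\delta,1}\ast u-K_{\delta,2}\ast u|^2dS=o(\delta)$. For continuous $u$ that is Lipschitz on $\overline\Omega$, both averages are over points within $C\delta$ of $x\in\Gamma$, since $K$ has compact support and the normalizing denominators are bounded below by $c$ for Lipschitz $\Gamma$. So $|K_{\delta,i}\ast u(x)-u(x)|\le C\delta\|\nabla u\|_\infty$, the penalty is $O(\delta^2)/\delta=O(\delta)\to0$, and the density argument completes the proof. In the diagonal step, one must check that the penalty for the smooth approximants stays small, which holds because the estimate above is uniform for each fixed smooth approximant as $\delta\to0$.
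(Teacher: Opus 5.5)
Your proposal is correct and follows essentially the same route as the paper. The liminf rests on a $\delta$-independent bound on the nonlocal energy via the trace inequality and \eqref{eq:Ku:4}, with Proposition~\ref{pr:2.4} and \eqref{eq:Ku:5} giving piecewise $H^1$ regularity, matching traces and the limit of the linear interface term; the limsup uses a density argument with the constant recovery sequence and the bound $|K_{\delta,i}\ast u-u|\le C\delta$ on $\Gamma$. The differences are cosmetic: you get the bulk liminf directly from weak lower semicontinuity of the difference quotients (with the normalization of $R$) instead of citing \cite[Lemma 4.6]{slepcev2019analysis}, and you write out the density/diagonal step that the paper takes from \cite[Remark 2.7]{garcia2016continuum}.
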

\begin{proof}
We prove the liminf inequality and the limsup inequality separately in the following.

The liminf inequality:
If $u_n\rightarrow u$ in $L^2(\Omega)$ as $n\rightarrow \infty$, we have
\[
 \liminf_{n\rightarrow \infty} \mathcal{E}_{\delta_n}(u_n)\geq \mathcal{E}(u).
\]
  Assume w.l.o.g. that $\mathcal{E}_{\delta_n}(u_n)<\infty$.
  Notice from Cauchy's inequality, the trace inequality, and \eqref{eq:Ku:4} that
  \begin{align*}
    &\frac{1}{\delta_n^2}\int_{\Omega}\int_{\Omega}\Lambda(x,y) R_{\delta_n}(x-y)|u_{n}(y)-u_{n}(x)|^2dydx
    \leq \mathcal{E}_{\delta_n}(u_n)
    +\int_{\Omega}fu_ndx
    +\int_\Gamma g K_{\delta_n,1}\ast u_n dS\\
    &\leq \mathcal{E}_{\delta_n}(u_n)+\int_\Omega |f|^2dx+C\int_\Omega |u_n|^2dx
    +\frac{1}{\epsilon}\int_\Gamma |g|^2dS\\
    &\qquad\qquad\qquad\qquad\qquad\qquad\qquad\qquad\qquad
    +\frac{C\epsilon}{\delta_n^2}\int_{\Omega_1}\int_{\Omega_1} R_{\delta_n}(x-y)|u_{n}(y)-u_{n}(x)|^2dydx.
  \end{align*}
  Choosing a small $\epsilon$, we arrive at
  \[
  \sup_{n}\frac{1}{\delta_n^2}\sum_{i=1,2}\int_{\Omega_i}\int_{\Omega_i} R_{\delta_n}(x-y)|u_{n}(y)-u_{n}(x)|^2dydx<\infty.
  \]
  Applying Proposition \ref{pr:2.4}, we have
  $u\in H^1(\Omega_1)\cap H^1(\Omega_2)$.
  Besides, by \eqref{eq:Ku:5},
    \begin{align*}
  \int_\Gamma \left|u|_{\Omega_1}-u|_{\Omega_2}\right|^2dS
  &= \lim_{n\rightarrow \infty}
 \int_\Gamma\left|K_{\delta_n,1}\ast{u}_{n}-K_{\delta_n,2}\ast{u}_{n}\right|^2dS \\
 &\leq \lim_{n\rightarrow \infty}
 \delta \left(\mathcal{E}_{\delta_n} (u_n) +\int_{\Omega}fu_n dx
 + \int_\Gamma gK_{\delta_n,1}\ast{u}_{n} dS\right)
 =0.
  \end{align*}
  Consequently, $u|_{\Omega_1}-u|_{\Omega_2}=0$ on $\Gamma$ and $u\in \mathcal{U}(\Omega)$.

  By the result on the $\Gamma$-convergence of nonlocal Laplacian \cite[Lemma 4.6]{slepcev2019analysis}, we have
  \[
  \liminf_{n\rightarrow \infty}
  \int_{\Omega_i}\int_{\Omega_i}R_{\delta_n}(x-y)|u_{n}(y)-u_{n}(x)|^2dydx
  \geq \int_{\Omega_i}|\nabla u|^2dx,\quad i=1,2.
  \]
  To pass to the limit for terms involving $K_{\delta_n,i}\ast{u}_{n}$, we utilize \eqref{eq:Ku:5}.
  This proves the liminf inequality.

The limsup inequality:
For any $u\in L^2(\Omega)$, there exists a sequence of functions $\{u_n\}\subset L^2(\Omega)$ such that $u_n\rightarrow u$ in $L^2(\Omega)$ as $n\rightarrow \infty$ and
  \[
    \limsup_{n\rightarrow 0} \mathcal{E}_{\delta_n}(u_n)\leq  \mathcal{E}(u).
  \]
Assume w.l.o.g. that $\mathcal{E}(u)<\infty$, i.e., $u\in H^1(\Omega)$. 
 By \cite[Remark 2.7]{garcia2016continuum},
  we only need to prove the limsup inequality for all $u$ in a dense subset of $H^1(\Omega)$, e.g., $C^2(\overline{\Omega})\cap H^1(\Omega)$.

  Taylor's expansion yields
  \[
  \lim_{n\rightarrow \infty}
  \frac{1}{\delta_n^2}\int_{\Omega_i}\int_{\Omega_i}R_{\delta_n}(x-y)|u_{i}(y)-u_{i}(x)|^2dydx
  \leq\int_{\Omega_i}|\nabla u_i|^2dx,\quad i=1,2.
  \]
  Clearly,
    \[
  K_{\delta_n,1}\ast{u}\rightarrow u ,\quad \mbox{in } L^2(\Gamma),
  \]
  as $n\rightarrow 0$.
  We are left to pass to the limit in 
  \[
  \frac{1}{\delta_n}
\int_\Gamma\left|K_{\delta_n,1}\ast u-K_{\delta_n,2}\ast u\right|^2dS.
  \]
  In fact, for any $x\in\Gamma$, by Taylor's expansion,
  \begin{align*}
    \left|K_{\delta_n,i}\ast u(x) -u(x)\right|
    &=\left|\frac{1}{\int_{\Omega_i}K_{\delta_n}(x-y)dy} \int_{\Omega_i}K_{\delta_n}(x-y)(u(y)-u(x))dy\right|\\
    &= 
    \left|\frac{1}{\int_{\Omega_i}K\left(\frac{|x-y|^2}{\delta_n^2}\right) dy} \int_{\Omega_i}K\left(\frac{|x-y|^2}{\delta_n^2}\right)
    \nabla u(z)\cdot (y-x)dy\right|
    \leq C \delta_n.
  \end{align*}
Consequently,
  \[
  \lim_{n\rightarrow \infty}\frac{1}{\delta_n}
\int_\Gamma\left|K_{\delta_n,1}\ast u-K_{\delta_n,2}\ast u\right|^2dS
=0.
  \]  
  The proof is completed by collecting the above results and taking $\limsup$ for $\mathcal{E}_{\delta_n} (u)$.
\end{proof}

To prove the convergence of minimizers for $\mathcal{E}_{\delta_n}$, a variant of the nonlocal Poincar\'{e}'s inequality is needed, in which the constant depends not on $\delta$.
\begin{lemma}\label{le:poincare_delta}
Let $\delta>0$, $1<p<\infty$, and $u\in L^p(\Omega)$ with $\int_\Omega u\,dx=0$.
There exists a constant $C$, depending only on $R$, $\Omega_1$, $\Omega_2$, and $p$, such that
\[
C\int_\Omega |u(x)|^pdx
\leq
 \frac{1}{\delta^p}\sum_{i=1,2}\int_{\Omega_i}\int_{\Omega_i}R_\delta(x-y)|u(y)-u(x)|^pdydx
 + \int_\Gamma |R_{\delta,1}\ast u-R_{\delta,2}\ast u|^pdS.
\]
\end{lemma}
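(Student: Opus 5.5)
We argue by contradiction and compactness, following Lemma \ref{le:poincare}; the difference now is that $\delta$ may vary along the sequence, so we must allow $\delta_n\to0$. Suppose the inequality fails for every $C$. Then there are $\delta_n>0$ and $u_n\in L^p_0(\Omega)$ with $\|u_n\|_{L^p(\Omega)}=1$ such that
\[
\frac{1}{\delta_n^p}\sum_{i=1,2}\int_{\Omega_i}\int_{\Omega_i}R_{\delta_n}(x-y)|u_n(y)-u_n(x)|^pdydx
+\int_\Gamma |R_{\delta_n,1}\ast u_n-R_{\delta_n,2}\ast u_n|^pdS\leq \frac1n .
\]
Passing to a subsequence, either $\delta_n\to\delta_*\in(0,\infty]$ or $\delta_n\to0$, and we treat the two regimes separately.

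\textbf{Case $\delta_n\to0$.} The nonlocal energies on $\Omega_i$ are uniformly bounded; indeed they tend to $0$. Proposition \ref{pr:2.4}, applied on each $\Omega_i$, gives a subsequence with $u_n\to u$ strongly in $L^p(\Omega_i)$, $u\in W^{1,p}(\Omega_i)$, and, by the weak convergence of difference quotients and lower semicontinuity, $\int_{\Omega_i}|\nabla u|^p\le C\liminf(\cdots)=0$. Hence $u=c_i$ is constant on each (connected) $\Omega_i$. Lemma \ref{le:Ku}, estimate \eqref{eq:Ku:5}, with $R$ in place of $K$ (it is also nonincreasing, $C^1$ and compactly supported), gives $R_{\delta_n,i}\ast u_n\to c_i$ in $L^p(\Gamma)$. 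The interface term then forces $c_1=c_2$. Strong convergence preserves the zero mean, so $c_1=c_2=0$, which contradicts $\|u\|_{L^p}=\lim\|u_n\|_{L^p}=1$.

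\textbf{Case $\delta_n\to\delta_*>0$.} If $\delta_*<\infty$, we compare with a fixed kernel. Since $R$ is nonincreasing with $R>0$ near $0$, there are $r>0$ and $c>0$ such that $R_{\delta_n}(s)\ge c\,\chi_{\{|s|<r\}}$ for all large $n$, and $1/\delta_n^p$ is bounded below. Thus the energy with the fixed kernel $\chi_{\{|s|<r\}}$ tends to $0$. Extracting a weak limit $u_n\rightharpoonup v$ and applying Proposition \ref{pr:2.3} (with this fixed kernel) on each $\Omega_i$ shows that $v$ is piecewise constant, as in Lemma \ref{le:poincare}.

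The main obstacle is twofold. First, one must upgrade weak to strong convergence to obtain the contradiction. I would handle this by noting that $\|u_n-\bar u_{n,i}\|_{L^p(\Omega_i)}\to0$, where $\bar u_{n,i}$ is the mean on $\Omega_i$, which follows directly from Proposition \ref{pr:2.3} with the fixed kernel. This makes $u_n$ close to piecewise constants, so the convergence is strong. Second, the interface term must be passed to the limit while the kernel varies. Since $R_{\delta_n,i}\ast u_n-\bar u_{n,i}=R_{\delta_n,i}\ast(u_n-\bar u_{n,i})$ and this normalized convolution is bounded from $L^p(\Omega_i)$ to $L^p(\Gamma)$ uniformly for $\delta_n$ in a compact subset of $(0,\infty)$, we obtain $\bar u_{n,1}-\bar u_{n,2}\to0$. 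The zero mean then gives both averages tending to $0$, a contradiction.

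If $\delta_*=\infty$, the factor $\delta_n^{-p}$ is small, so the first term degenerates. If $R$ has compact support this regime cannot be controlled, and I expect that the statement implicitly restricts to bounded $\delta$, say $\delta\le\delta_0$, which suffices for a null sequence. I would state that assumption explicitly, so that only the two cases above need to be treated.
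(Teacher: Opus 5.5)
Your proposal is correct, and in the essential case $\delta_n\to0$ it is the paper's argument: contradiction with $\|u_n\|_{L^p}=1$, Proposition \ref{pr:2.4} on each $\Omega_i$ to get a limit constant on each subdomain, the interface term to equate the constants, and the zero mean to conclude. You also make explicit the strong convergence needed for the final contradiction, which the paper leaves implicit.

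The paper simply takes the violating sequence $\delta_n$ to be null, so your separate treatment of $\delta_n\to\delta_*\in(0,\infty)$ fills a step it skips. Your restriction to $\delta\le\delta_0$ is genuinely necessary, not a convenience, and the obstruction has nothing to do with the support of $R$. As $\delta\to\infty$ the kernel flattens to $\delta^{-d}R(0)$ on $\Omega$. For any nonzero $u$ with zero mean on both $\Omega_1$ and $\Omega_2$, the first term then decays like $\delta^{-p-d}$ and $R_{\delta,i}\ast u\to0$ uniformly on $\Gamma$. So the inequality fails for every $\delta$-independent constant.
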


\begin{proof}
The proof is similar to Lemma \ref{le:poincare}.
We argue by contradiction.
Assume that the inequality is false.
Then there exist $u_n\in L^2(\Omega)$ and a null sequence $\delta_n$, such that
\[
\|u_n\|_{L^p(\Omega)}=1,
\qquad
\int_\Omega u_n\,dx=0,
\]
and
\[
 \frac{1}{\delta_n^p}\sum_{i=1,2}\int_{\Omega_i}\int_{\Omega_i}R_{\delta_n}(x-y)|u_n(y)-u_n(x)|^pdydx
 + \int_\Gamma |R_{\delta_n,1}\ast u_n-R_{\delta_n,2}\ast u_n|^pdS
\leq \frac{1}{n}.
\]
We obtain, up to a subsequence, $u_n \rightharpoonup u$ in $L^p(\Omega)$ for a function $u\in L^p(\Omega)$.
By Proposition \ref{pr:2.4},
we have $u\in W^{1,p}(\Omega_1)\cap W^{1,p}(\Omega_2)$ and $\nabla u=0$ in $\Omega_i$.
Namely, $u$ is a constant in both $\Omega_1$ and $\Omega_2$.
Since $\int_\Gamma |R_{\delta_n,1}\ast u_n-R_{\delta_n,2}\ast u_n|^pdS \rightarrow 0$ as $n\rightarrow\infty$, we see that $u$ is a constant in $\Omega$.
This contradicts $\|u_n\|_{L^p(\Omega)}=1$ and $\int_\Omega u_ndx=0$.
\end{proof}

\begin{theorem}\label{th:converge_delta}
Assume that $\Omega_1$ and $\Omega_2$ are smooth.
Let $\lambda_1, \lambda_2>0$, $f\in L^2(\Omega)$, and $g\in L^2(\Gamma)$. If $\delta_n$ is a null sequence,
   $u_n$ and $u$ are the minimizers of $\mathcal{E}_{\delta_n}$ and $\mathcal{E}$, then
   \[
   u_n\rightarrow u,\quad \mbox{in } L^2(\Omega),
   \]
   as $n\rightarrow\infty$.
\end{theorem}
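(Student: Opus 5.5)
The plan is to apply Proposition \ref{pr:2.2} with $X=L^2(\Omega)$, $F_n=\mathcal{E}_{\delta_n}$ and $F=\mathcal{E}$.

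Theorem \ref{th:Gamma_converge_delta} gives $\mathcal{E}_{\delta_n}\stackrel{\Gamma}{\longrightarrow}\mathcal{E}$, and $\mathcal{E}\not\equiv\infty$ since $\mathcal{E}(0)=0$. By Theorem \ref{th:existence_nonlocal}, the minimizers $u_n$ exist, so $\mathcal{E}_{\delta_n}(u_n)-\inf\mathcal{E}_{\delta_n}=0$. What remains is to show that $\{u_n\}$ is precompact in $L^2(\Omega)$, and that $\mathcal{E}$ has a unique minimizer $u$. Once both are in hand, every cluster point of $\{u_n\}$ equals $u$, and precompactness upgrades this to convergence of the whole sequence.

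\textbf{Uniform bounds.} Since $u_n$ minimizes, $\mathcal{E}_{\delta_n}(u_n)\le\mathcal{E}_{\delta_n}(0)=0$. This gives
\[
\frac{1}{2\delta_n^2}\sum_{i}\lambda_i\int_{\Omega_i}\int_{\Omega_i}R_{\delta_n}|u_n(y)-u_n(x)|^2
+\frac{1}{\delta_n}\int_\Gamma|K_{\delta_n,1}\ast u_n-K_{\delta_n,2}\ast u_n|^2
\le \int_\Omega fu_n+\int_\Gamma gK_{\delta_n,1}\ast u_n .
\]
To bound the right-hand side, first note $\int_\Gamma|K_{\delta,1}\ast u|^2dS\le C\|K_{\delta,1}\ast u\|^2_{H^1(\Omega_1)}$. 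By \eqref{eq:Ku:4} and the $L^2$ bound on the convolution, this is at most $C(\|u\|_{L^2}^2+\delta^{-2}\iint_{\Omega_1}R_\delta|u(y)-u(x)|^2)$, with $C$ independent of $\delta$. Next I need a $\delta$-independent Poincaré inequality. I would apply Lemma \ref{le:poincare_delta} with the kernel $K$ in place of $R$; its proof only uses Proposition \ref{pr:2.4} and \eqref{eq:Ku:5}, which works for $K$ as well, so the roles of $R$ and $K$ can be exchanged. For $\delta_n\le1$ this gives $\|u_n\|_{L^2}^2\le C D_n$, where $D_n$ denotes the left-hand side above. Young's inequality then absorbs the terms containing $u_n$, and we obtain $\sup_n D_n<\infty$ and $\sup_n\|u_n\|_{L^2}<\infty$.

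\textbf{Compactness and identification of the limit.} With these bounds, Proposition \ref{pr:2.4}, applied on each smooth $\Omega_i$ separately, shows that $\{u_n|_{\Omega_i}\}$ is precompact in $L^2(\Omega_i)$. Hence $\{u_n\}$ is precompact in $L^2(\Omega)$. Any cluster point $v$ has zero mean, and by Proposition \ref{pr:2.2} it minimizes $\mathcal{E}$, so in particular $v\in\mathcal{U}(\Omega)$. Uniqueness of the minimizer of $\mathcal{E}$ on $\mathcal{U}(\Omega)$ comes from strict convexity: $\frac12\int\lambda|\nabla u|^2$ is strictly convex on mean-zero $H^1(\Omega)$ by the classical Poincaré inequality, and the remaining terms are linear. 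Therefore $v=u$, and the whole sequence converges to $u$ in $L^2(\Omega)$.

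The main obstacle is the uniform coercivity step. Lemma \ref{le:poincare_delta} is stated with $R$-averages on $\Gamma$, while the functional penalizes $K$-averages, and in addition the penalty carries the factor $1/\delta_n$. I expect to handle this by rerunning the contradiction proof of Lemma \ref{le:poincare_delta} with $K$-averages, using \eqref{eq:Ku:5} to identify the limits of the averages on $\Gamma$. The factor $1/\delta_n\ge1$ then only strengthens the bound.
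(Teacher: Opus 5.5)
Your proposal is correct and follows the paper's proof. Both arguments get a uniform $L^2$ bound on $u_n$ from the $\delta$-independent Poincar\'{e} inequality of Lemma \ref{le:poincare_delta} together with Cauchy's inequality, the trace inequality and \eqref{eq:Ku:4}, take precompactness from Proposition \ref{pr:2.4} on each $\Omega_i$, and conclude with Theorem \ref{th:Gamma_converge_delta} and Proposition \ref{pr:2.2}.

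Your extra details fill in steps the paper leaves implicit: the bound $\mathcal{E}_{\delta_n}(u_n)\le\mathcal{E}_{\delta_n}(0)=0$, reading Lemma \ref{le:poincare_delta} with $K$-averages (the $1/\delta_n$ factor only helps), and using uniqueness of the minimizer of $\mathcal{E}$ to pass from cluster points to convergence of the whole sequence.
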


\begin{proof}
Since $u_n$ is a minimizer of $\mathcal{E}_{\delta_n}$, we have $\sup_{n}|\mathcal{E}_{\delta_n}(u_n)|<\infty$.
Notice that
\begin{align*}
  \frac{1}{\delta_n^2}\int_{\Omega}\int_{\Omega}\Lambda(x,y)R_{\delta_n}(x-y)|u_n(y)-u_n(x)|^2dydx
  +\int_\Gamma\left|K_{\delta_n,1}\ast u_n-K_{\delta_n,2}\ast u_n\right|^2dS
  \leq \mathcal{E}_{\delta_n}(u_n) \\
  +\int_{\Omega}fu_ndx
  +\int_\Gamma g K_{\delta_n,1}\ast u_n dS.
\end{align*}
We apply Lemma \ref{le:poincare_delta} for the left-hand side and apply Cauchy's inequality, the trace inequality, \eqref{eq:Ku:4} for the right-hand side to obtain that $\int_\Omega |u_n|^2dx$ is uniformly bounded.
Proposition \ref{pr:2.4} then implies that $u$ is precompact in both $L^2(\Omega_1)$ and $L^2(\Omega_2)$.
Consequently, $u$ is precompact in $L^2(\Omega)$.
  The proof is completed by utilizing Theorem \ref{th:Gamma_converge_delta} and Proposition \ref{pr:2.2}.
\end{proof}

We state the $\Gamma$-convergence and the convergence of minimizers for $\mathcal{F}_{\delta_n}$ as follows. The proof is similar to the case of $\mathcal{E}_{\delta_n}$ and is omitted here.
\begin{theorem}
  Let $1<p<\infty$, $q=\frac{p}{p-1}$, $\lambda_1, \lambda_2>0$, $f\in L^q(\Omega)$, and $g\in L^q(\Gamma)$. Then for any null sequence $\delta_n$,
  \begin{equation*}
    \mathcal{F}_{\delta_n}\stackrel{\Gamma}{\longrightarrow }\mathcal{F},
  \end{equation*}
  in $L^p(\Omega)$ as $n\rightarrow\infty$.

  Furthermore, assume that $\Omega_1$ and $\Omega_2$ are smooth. If $u_n$ and $u$ are the minimizers of $\mathcal{F}_{\delta_n}$ and $\mathcal{F}$, then
   \[
   u_n\rightarrow u,\quad \mbox{in } L^p(\Omega),
   \]
   as $n\rightarrow\infty$.
\end{theorem}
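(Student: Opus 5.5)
We follow the proofs of Theorem \ref{th:Gamma_converge_delta} and Theorem \ref{th:converge_delta}. The only structural change is that the interface term now carries the fixed weight $\frac{\gamma}{p}$ instead of $\frac{1}{\delta}$. It therefore no longer forces continuity across $\Gamma$; it converges to the membrane term $\frac{\gamma}{p}\int_\Gamma|u_1-u_2|^pdS$.

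\emph{Liminf inequality.} Let $u_n\to u$ in $L^p(\Omega)$ and assume $\sup_n\mathcal{F}_{\delta_n}(u_n)<\infty$.
\begin{itemize}
\item \emph{Uniform bound on the nonlocal energy.} Since $\gamma>0$, the interface term is nonnegative and can be dropped. We bound $\int_\Omega fu_n\,dx$ by H\"older's inequality. We bound $\int_\Gamma gK_{\delta_n,1}\ast u_n\,dS$ by Young's inequality with exponents $p,q$, followed by the trace inequality on $\Omega_1$ and \eqref{eq:Ku:4}. This absorbs $\epsilon\,\delta_n^{-p}\iint_{\Omega_1}R_{\delta_n}|u_n(y)-u_n(x)|^p$ into the left-hand side. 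The result is
\[
\sup_n \delta_n^{-p}\sum_{i=1,2}\iint_{\Omega_i\times\Omega_i}R_{\delta_n}(x-y)|u_n(y)-u_n(x)|^p\,dydx<\infty .
\]
\item \emph{Regularity of the limit.} Proposition \ref{pr:2.4}, applied on each $\Omega_i$ separately, gives $u\in W^{1,p}(\Omega_1)\cap W^{1,p}(\Omega_2)$. The mean-zero constraint passes to the limit, so $u\in\mathcal{W}(\Omega)$.
\item \emph{Bulk term.} The $p$-version of \cite[Lemma 4.6]{slepcev2019analysis} gives the liminf inequality for the bulk term on each $\Omega_i$. The normalization $\int R(|z|^2)|z_1|^2dz=1$ must be replaced by the $p$-moment $\sigma_p$, which enters as a constant factor.
\item \emph{Interface terms.} By \eqref{eq:Ku:5}, $K_{\delta_n,i}\ast u_n\to u_i$ strongly in $L^p(\Gamma)$. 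Hence $\int_\Gamma gK_{\delta_n,1}\ast u_n\,dS\to\int_\Gamma gu_1\,dS$. Also $\int_\Gamma|K_{\delta_n,1}\ast u_n-K_{\delta_n,2}\ast u_n|^pdS\to\int_\Gamma|u_1-u_2|^pdS$, and this is an actual limit, not just a liminf.
\end{itemize}

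\emph{Limsup inequality.} Take $u\in\mathcal{W}(\Omega)$ and use the recovery sequence $u_n=u$.
\begin{itemize}
\item \emph{Reduction to smooth data.} By density, and since $\Gamma$-limsup is lower semicontinuous (\cite[Remark 2.7]{garcia2016continuum}), it suffices to treat $u$ with $u_i\in C^2(\overline{\Omega_i})$. Such functions are dense in $W^{1,p}(\Omega_1)\times W^{1,p}(\Omega_2)$ when the subdomains are smooth. The functional $\mathcal{F}$ is continuous in that topology because the trace is continuous, so approximating in this topology suffices.
\item \emph{Bulk term.} Because $\Lambda$ decouples $\Omega_1$ and $\Omega_2$, Taylor expansion on each $\Omega_i$ gives the limsup for the bulk term.
\item \emph{Interface terms.} The estimate $|K_{\delta_n,i}\ast u_i-u_i|\le C\delta_n$ on $\Gamma$, shown in the proof of Theorem \ref{th:Gamma_converge_delta}, gives uniform convergence of both convolutions to the traces $u_i$. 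Both $\Gamma$-terms then converge.
\end{itemize}

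\emph{Convergence of minimizers.}
\begin{itemize}
\item \emph{Uniform $L^p$ bound.} Compare with $\mathcal{F}_{\delta_n}(0)=0$ to get $\mathcal{F}_{\delta_n}(u_n)\le 0$. Lemma \ref{le:poincare_delta}, stated with $R$ but applicable verbatim with $K$ since the proof only uses that the interface averages converge to traces, bounds $\|u_n\|_{L^p}^p$ by the nonlocal energy plus $\int_\Gamma|K_{\delta_n,1}\ast u_n-K_{\delta_n,2}\ast u_n|^p\,dS$. Since $\gamma>0$ is fixed, this sum is controlled by $\mathcal{F}_{\delta_n}(u_n)$ plus the linear terms. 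Young's inequality then yields the bound, the $\epsilon\|u_n\|^p$ pieces being absorbed.
\item \emph{Compactness.} Proposition \ref{pr:2.4}, applied on each smooth $\Omega_i$, gives precompactness of $\{u_n\}$ in $L^p(\Omega)$.
\item \emph{Conclusion.} Proposition \ref{pr:2.2} shows that every cluster point minimizes $\mathcal{F}$. The minimizer of $\mathcal{F}$ on $\mathcal{W}(\Omega)$ is unique, by strict convexity of $t\mapsto|t|^p$ together with a local Poincar\'e inequality of the form of Lemma \ref{le:poincare}. Hence the whole sequence converges.
\end{itemize}

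\emph{Main obstacle.} The hardest step is justifying \eqref{eq:Ku:5} for general $p$ and the $p$-versions of the liminf bound and of Lemma \ref{le:poincare_delta}. These rest on \eqref{eq:Ku:4} from \cite{gan2024convergence}, which I would verify for general $p$ by differentiating the normalized convolution and using H\"older's inequality. The other essential ingredient is the trace inequality on each $\Omega_i$, which upgrades weak $W^{1,p}$ convergence to strong $L^p(\Gamma)$ convergence through compactness of the trace map from $W^{1,p}(\Omega_i)$ to $L^p(\Gamma)$.
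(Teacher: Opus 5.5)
Your proposal is correct and is exactly the adaptation of the proofs of Theorems~\ref{th:Gamma_converge_delta} and~\ref{th:converge_delta} that the paper calls ``similar'' and omits; the only change is that the fixed weight $\frac{\gamma}{p}$ now produces the membrane term $\frac{\gamma}{p}\int_\Gamma|u_1-u_2|^p\,dS$ instead of forcing continuity across $\Gamma$. Your side remarks are accurate refinements of the paper: the bulk limit carries the factor $\sigma_p=\int_{\R^d}R(|z|^2)|z_1|^p\,dz$, so for $p\neq 2$ the normalization of $R$ must be changed for the $\Gamma$-limit to be exactly $\mathcal{F}$; \eqref{eq:Ku:5} has to be applied on each $\Omega_i$ separately, since in general $u\notin W^{1,p}(\Omega)$; and Lemma~\ref{le:poincare_delta} should be read with $K$ in place of $R$.
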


\section{Numerical validation}
We numerically verify the convergence of the nonlocal minimizer to its local counterpart by measuring the $L^2$ error between $u_n$ and $u$, the minimizers of $\mathcal{E}_{\delta_n}$ and $\mathcal{E}$. One- and two-dimensional problems are tested; higher dimensions are analogous. In the sequel we omit the subscript and write $\delta\to0$. The code is available at \url{https://github.com/shwangtangjun/Nonlocal-Interface}.

\subsection{Numerical method}
The method is adopted from \cite{gan2025nonlocal}, where multidimensional integrals are evaluated efficiently by exploiting the tensor-product structure of the Gaussian kernel, the domain, and the finite element basis.

We use the Gaussian kernel
\[
  R_\delta(x-y)=K_\delta(x-y)=c_\delta
  \exp\left(-\frac{|x-y|^2}{\delta^2}\right),
\]
where $c_\delta=2 \pi^{-d/2}\delta^{-d}$ satisfies the normalization of $R$. The same kernel is used for $K$; its multiplicative constant cancels in the normalized convolution $K_{\delta,i}\ast u$.

The computational domain is $\Omega=[0,1]^d$, partitioned into a uniform tensor-product mesh
$$\Omega=\bigcup_{i_1,\cdots,i_d=0}^{N-1} \Omega_{i_1,\cdots,i_d},$$
with
$$\Omega_{i_1,\cdots,i_d}=e_{i_1}\times \cdots \times e_{i_d},\quad e_i=[ih,(i+1)h],\quad h=1/N.$$ 
The finite element space is
\[V_h=\{u\in C(\Omega): u\; \mbox{is multi-cubic polynomial on} \;\Omega_{i_1,\cdots,i_d}\}.\]
We solve $\min_{u\in V_h}\mathcal{E}_\delta(u)$. The standard piecewise cubic Lagrange basis is associated with the nodes $s_j=jh/3$, $j=0,\ldots,3N$. The one-dimensional basis functions $\{\psi_j\}_{j=0}^{3N}$ are
\[
\psi_{3i}(s)=
\begin{cases}
\displaystyle
\frac{(s-s_{3i-3})(s-s_{3i-2})(s-s_{3i-1})}
{(s_{3i}-s_{3i-3})(s_{3i}-s_{3i-2})(s_{3i}-s_{3i-1})},
& s\in e_{i-1},\\[2mm]
\displaystyle
\frac{(s-s_{3i+1})(s-s_{3i+2})(s-s_{3i+3})}
{(s_{3i}-s_{3i+1})(s_{3i}-s_{3i+2})(s_{3i}-s_{3i+3})},
& s\in e_i,\\[2mm]
0, & \mbox{otherwise},
\end{cases}
\]
\[
\psi_{3i+1}(s)=
\begin{cases}
\displaystyle
\frac{(s-s_{3i})(s-s_{3i+2})(s-s_{3i+3})}
{(s_{3i+1}-s_{3i})(s_{3i+1}-s_{3i+2})(s_{3i+1}-s_{3i+3})},
& s\in e_i,\\[2mm]
0, & \mbox{otherwise},
\end{cases}
\]
\[
\psi_{3i+2}(s)=
\begin{cases}
\displaystyle
\frac{(s-s_{3i})(s-s_{3i+1})(s-s_{3i+3})}
{(s_{3i+2}-s_{3i})(s_{3i+2}-s_{3i+1})(s_{3i+2}-s_{3i+3})},
& s\in e_i,\\[2mm]
0, & \mbox{otherwise}.
\end{cases}
\]
In two dimensions we use the tensor-product basis
\[
  \psi_{j_1,j_2}(x_1,x_2)=\psi_{j_1}(x_1)\psi_{j_2}(x_2),
  \qquad j_1,j_2=0,\ldots,3N.
\]
Hence, for $d=1,2$,
\[
  u(x)=\sum_{j=0}^{3N}U_j\psi_j(x),
  \qquad
  u(x_1,x_2)=\sum_{j_1,j_2=0}^{3N}
  U_{j_1,j_2}\psi_{j_1}(x_1)\psi_{j_2}(x_2).
\]

Kernel integrals of the form
\[
  \int_a^b \exp\left(-\frac{(x-y)^2}{\delta^2}\right)\psi_j(y)\,dy
\]
are evaluated via the change of variables $r=y-x$, $\eta=1/\delta$, reducing to linear combinations of the Gaussian moments
\begin{align*}
f_0(\eta,a,b)&=\int_a^b e^{-\eta^2r^2}\,dr
=\frac{\sqrt{\pi}}{2\eta}\left(\operatorname{erf}(\eta b)-\operatorname{erf}(\eta a)\right),\\
f_1(\eta,a,b)&=\int_a^b r e^{-\eta^2r^2}\,dr
=\frac{1}{2\eta^2}\left(e^{-\eta^2a^2}-e^{-\eta^2b^2}\right),\\
f_2(\eta,a,b)&=\int_a^b r^2 e^{-\eta^2r^2}\,dr=\frac{a}{2\eta^2}e^{-\eta^2a^2}
-\frac{b}{2\eta^2}e^{-\eta^2b^2}
+\frac{1}{2\eta^2}f_0(\eta,a,b),\\
f_3(\eta,a,b)&=\int_a^b r^3 e^{-\eta^2r^2}\,dr=\frac{a^2}{2\eta^2}e^{-\eta^2a^2}
-\frac{b^2}{2\eta^2}e^{-\eta^2b^2}
+\frac{1}{\eta^2}f_1(\eta,a,b).
\end{align*}
Here $\operatorname{erf}(z)=\frac{2}{\sqrt{\pi}}\int_0^z e^{-t^2}\,dt$. Since each $\psi_j$ is a cubic polynomial, every one-dimensional kernel coefficient is expressed from $f_0,f_1,f_2,f_3$.
In two dimensions, tensor-product structure yields
\begin{align*}
&\int_{e_{i_1}\times e_{i_2}}
\exp\left(-\frac{|x-y|^2}{\delta^2}\right)
\psi_{j_1}(y_1)\psi_{j_2}(y_2)\,dy\\
&\quad =
\int_{e_{i_1}}\exp\left(-\frac{(x_1-y_1)^2}{\delta^2}\right)\psi_{j_1}(y_1)\,dy_1
\int_{e_{i_2}}\exp\left(-\frac{(x_2-y_2)^2}{\delta^2}\right)\psi_{j_2}(y_2)\,dy_2.
\end{align*}
Thus multidimensional kernel integrations reduce to products of one-dimensional quantities.

Sharp interface integrals are treated analogously: in 1D, $\Gamma$ is a single point; in 2D, it is a union of line segments, and the integral is the sum over each segment.

We employ the composite Simpson's $3/8$ rule on each cell for $\int_\Omega f u\,dx$ and for the 2D interface integrals; the integrands are sufficiently regular. For the nonlocal energy term, which approximates $|\nabla u(x)|^2$, the integrand exhibits an $O(\delta)$ boundary layer at cell interfaces because the cubic basis is only $C^0$ across elements. Hence we use a tailored quadrature on each one-dimensional cell $e_i=[ih,(i+1)h]$: if $6\delta<h$, the cell is split into
\[
[ih,ih+3\delta],\quad [ih+3\delta,(i+1)h-3\delta],\quad [(i+1)h-3\delta,(i+1)h],
\]
and a 10-point Gauss-Legendre rule is applied on each subinterval; if $6\delta\ge h$, a 30-point Gauss-Legendre rule is used on the whole cell. The two-dimensional quadrature is obtained by tensor products.

After assembly, $\mathcal{E}_\delta(u)$ is a quadratic form in the coefficient vector $U\in\mathbb{R}^{(3N+1)^d}$; differentiation gives $SU=b$. The zero-mean constraint $\int_\Omega u\,dx=0$ is imposed via a Lagrange multiplier. Let $m$ be the vector of Simpson weights. The augmented system
\[
  \begin{bmatrix}
  S & m\\
  m^T & 0
  \end{bmatrix}
  \begin{bmatrix}
  U\\ \mu
  \end{bmatrix}
  =
  \begin{bmatrix}
  b\\0
  \end{bmatrix}
\]
is solved by a direct sparse solver.

\subsection{Experiment setup}
We set $\lambda_1=1$, $\lambda_2=9$. Given a ground-truth solution $u_{\mathrm{gt}}$, we take
\[
  f=-\lambda_i\Delta u_{\mathrm{gt}}\quad \mbox{in } \Omega_i,\qquad i=1,2,
\]
and compute $g$ from the flux jump across $\Gamma$. All $u_{\mathrm{gt}}$ have zero mean, consistent with the Neumann constraint.

In 1D, $\Omega=(0,1)$, $\Omega_1=(0,1/2)$, $\Omega_2=(1/2,1)$, and
\[
u_{\mathrm{gt}}(x)=\cos(\pi x).
\]

In 2D, $\Omega=(0,1)^2$, $\Omega_1=(0.3,0.6)\times(0.4,0.7)$, $\Omega_2=\Omega\setminus\overline{\Omega}_1$.
Let
\[
q(t)=3t^2-2t^3-\frac{1}{2},\qquad \alpha=0.35,\qquad \beta=0.25.
\]
Then
\[
u_{\mathrm{gt}}(x,y)=q(x)+\alpha q(y)+\beta q(x)q(y).
\]

We use $N=10,20,30,40,50$ and 50 logarithmically spaced $\delta\in[10^{-5},10^{-2}]$. The numerical solution is interpolated onto $M=5000$ uniformly distributed points in 1D and an $M\times M$ grid with $M=1000$ in 2D. Errors are defined as
\begin{align*}
\operatorname{Error}_{\text{1D}}
&=\left(\frac{1}{M}\sum_{j=0}^{M-1}|u(x_j)-u_{\mathrm{gt}}(x_j)|^2\right)^{1/2},\\
\operatorname{Error}_{\text{2D}}
&=\left(\frac{1}{M^2}\sum_{j_1=0}^{M-1}\sum_{j_2=0}^{M-1}
|u(x_{j_1},y_{j_2})-u_{\mathrm{gt}}(x_{j_1},y_{j_2})|^2\right)^{1/2}.
\end{align*}

\subsection{Numerical results}
Figure~\ref{fig:numerical_1d} shows the 1D convergence. In the log-log scale the $L^2$ error decays approximately linearly with $\delta$ for all $N$. The curves nearly coincide, and the inset reveals only a mild dependence on $N$, indicating that the nonlocal approximation error dominates.

\begin{figure}[H]
\centering
\includegraphics[width=0.78\linewidth]{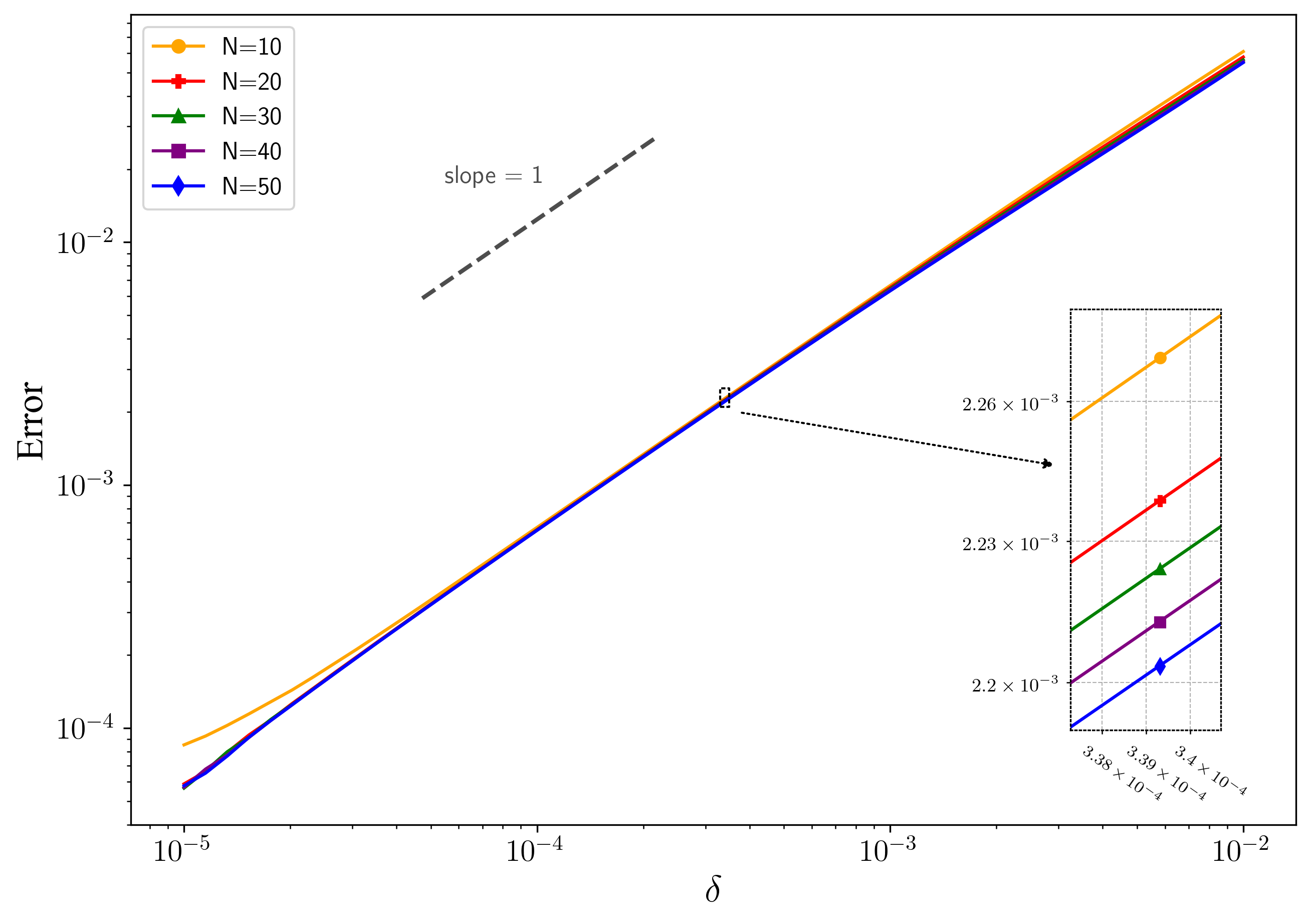}
\caption{The $L^2$ error $\|u-u_{\mathrm{gt}}\|_{L^2}$ for the one-dimensional interface problem.}
\label{fig:numerical_1d}
\end{figure}

Figure~\ref{fig:numerical_2d} displays the 2D results; the same first-order trend with respect to $\delta$ is observed. Although the interface is a closed rectangle, the behavior remains consistent with the 1D case. These experiments support convergence of the sharp-interface nonlocal model as $\delta\to0$.

\begin{figure}[H]
\centering
\includegraphics[width=0.78\linewidth]{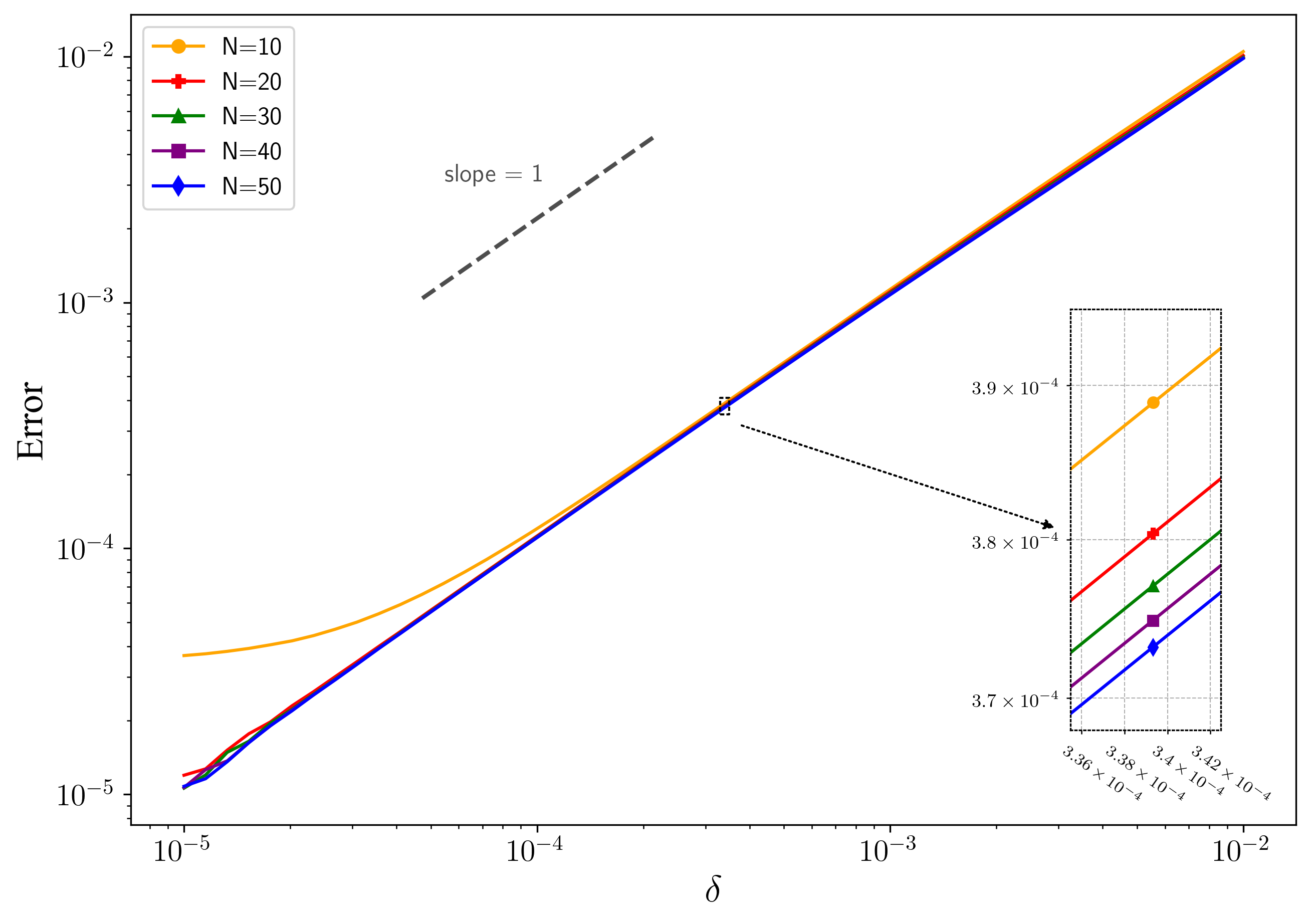}
\caption{The $L^2$ error $\|u-u_{\mathrm{gt}}\|_{L^2}$ for the two-dimensional interface problem.}
\label{fig:numerical_2d}
\end{figure}

\section{Conclusion}
We proposed an energy-based nonlocal Poisson interface problem that preserves a sharp interface.
This formulation yields several favorable properties and readily extends to more general interface problems, which we verified through a $p$-Laplacian interface problem with membrane conditions.
Using $\Gamma$-convergence, we proved that minimizers of the nonlocal functionals converge to those of the corresponding local problems.
Numerical experiments further confirm this convergence.




\bibliographystyle{unsrt}
\bibliography{references}
\end{document}